\newtheorem{thm}{Theorem}
\newtheorem{lemma}{Lemma}
\newtheorem{lemma2}[lemma]{Lemma}
\newtheorem{lemma3}[lemma]{Lemma}
\newtheorem{lemma5}[lemma]{Lemma}
\newtheorem{thm2}[thm]{Theorem}
\newtheorem{prop}{Proposition}
\newtheorem{rmk1}{Remark}
\newtheorem{rmk2}[rmk1]{Remark}
\newcommand{\sig}{\tilde{\sigma}}
\newcommand{\N}{\mathcal{N}}
\begin{document}

\title{The smallest prime that does not split completely in a number field}

\author{Xiannan Li}
\email{xli@math.stanford.edu}
\subjclass[2000]{Primary 11N60; Secondary 11R42.}
\address{Department of Mathematics, Stanford University, Stanford, CA 94305}

\date{\today}
\thanks
{The author is partially supported by a NSERC PGS-D award.}
\maketitle
\begin{abstract}
We study the problem of bounding the least prime that does not split completely in a number field.  This is a generalization of the classic problem of bounding the least quadratic non-residue.  Here, we present two distinct approaches to this problem.  The first is by studying the behavior of the Dedekind zeta function of the number field near $1$, and the second by relating the problem to questions involving multiplicative functions.  We derive the best known bounds for this problem for all number fields with degree greater than $2$.  We also derive the best known upper bound for the residue of the Dedekind zeta function in the case where the degree is small compared to the discriminant.
\end{abstract}
\section{Introduction}
\subsection{Historical Background}
Let $\N$ denote the least quadratic non-residue modulo a prime $p$.  An old and difficult problem in number theory is to find good upper bounds for $\N$.  Much work has been done on this problem, and we will only mention a small selection of that here.  

The best result known arises from considerations of cancellation in character sums.  To be more specific, let $\chi$ be the quadratic character with modulus $p$.  Then we say that $\chi$ exhibits cancellation at $x=x(p)$ if $\sum_{n\leq x} \chi(n) = o(x)$.  Thus, the well known bound of Polya and Vinogradov for character sums implies that cancellation occurs for $x = p^{1/2+o(1)}$ (see \cite{Da}).  Vinogradov \cite{Vi} proved that such cancellation implies that the least quadratic non-residue is $\N \ll p^{\frac{1}{2\sqrt{e}}+o(1)}$.  Burgess showed in \cite{Bu} that cancellation occurs at $x = p^{1/4 + o(1)}$, and this implied that 
\begin{equation} \label{burgesseqn}
\N \ll p^{\frac{1}{4\sqrt{e}}+o(1)},
\end{equation}which apart from different quantifications of $o(1)$ is the best result known.

Vinogradov conjectured that $\N \ll_\epsilon p^{\epsilon}$ for any $\epsilon > 0$.  This is very reasonable since the Riemann hypothesis for $L(s, \chi)$ implies the stronger bound of 
\begin{equation} \label{eqnGRHnonquadres}
\N \ll \log^2 p.
\end{equation}  The true bound is suspected to be $\N \ll \log p \log\log p$, arising from probablistic considerations.

\subsection{Generalization} This problem is the same as finding the least prime which does not split completely in a quadratic field.  A generalization is to find upper bounds for the least prime which does not split completely in an arbitrary number field.  Let $K$ be a number field of degree $l$ with discriminant $d_K$, $\N$ the least prime which does not split, and let $\zeta_K(s)$ denote its Dedekind zeta function.  Then $\zeta_K(s)$ is analytic on the complex plane except for a simple pole at $s=1$.  We have moreover that the Euler product
$$\zeta_K(s) = \prod_\mathfrak{p} \left(1-\frac{1}{N(\mathfrak{p})^s}\right)^{-1}
$$holds for $\Re s > 1$, where the product is over prime ideals $\mathfrak{p}$ and $N(\mathfrak{p})$ denotes the norm of $\mathfrak{p}$.  We note that if all integer primes split over $K$ that the Euler product for $\zeta_K(s)$ would be the same as that for $\zeta(s)^l$, where as usual $\zeta(s)$ denotes the Riemann zeta function.  Since $\zeta(s)^l$ has a pole of order $l$ at $s=1$ and $\zeta_K(s)$ has only a simple pole at $s=1$, we see that not all primes split.  This also leads to quantifications of the statement that the least prime which does not split cannot be too large and even suggests that stronger results should be available as $l$ grows.  Using this approach, K. Murty \cite{Mu} showed that assuming GRH for $\zeta_K(s)$ that $\N \ll \left(\frac{\log d_K}{l-1}\right)^2$ which is analogous to (\ref{eqnGRHnonquadres}).  Unconditionally, Murty notes in a remark in \cite{Mu} that his method would give a bound with a main term that is of the form
\begin{equation}
\N \ll d_K^{\frac{1}{2(l - 1)}}.
\end{equation}
This type of result was explicitly proved later using essentially elementary methods by Vaaler and Voloch \cite{VaVo}.  Their result is that
\begin{equation}\label{VVresult}
\N \leq 26l^2 d_K^{\frac{1}{2(l - 1)}}.
\end{equation}provided that 
$$d_K \geq \frac{1}{8}e^{2(l-1)\max(105, 25\log^2l)}.$$  
Vaaler and Voloch note that this result is an improvement on the more general result of Lagarias, Montgomery, and Odlyzko\cite{LMO}.  The latter condition on the size of $d_K$ is artificial, and there is reason to expect even better results when $d_K$ is small compared to $l$.  

Can this result be improved by some generalization of Vinogradov's method?  Interestingly enough, we will show that this is not the case in Theorem \ref{thmnegativresult}.  In fact, the best result from Vinogradov's approach is also a bound of the same form.  Later in \S 2, Lemma \ref{splitlem} and the discussion immediately preceding gives an alternate fourth proof of the $d_K^{\frac{1}{2(l-1)}}$ bound.

It thus appears that $d_K^{\frac{1}{2(l-1)}}$ is a natural barrier.  However, using some ideas involving basic information on the zeros if $\zeta_K(s)$, we prove a result of the form
$$\N \ll d_K^{\frac{1}{4(l-1)}\left(1+o(1)\right)}
$$in Theorem \ref{main}.

We also show that approaching the problem with multiplicative functions does pay dividends in some cases, which appear in Theorems \ref{thmcubic} and \ref{thmbiquad} where we derive good bounds for $\N$ in the cases where $K$ is cubic or biquadratic.  The idea here is to study how certain multiplicative functions interact with one another and take advantage of the behaviour of extremal quadratic characters.  The behaviour of extremal quadratic characters has appeared previously in \cite{DMV}, which reproduces unpublished work of Heath-Brown.  It is also contained in unpublished work of Granville and Soundararajan \cite{GSBurgess}.  In Lemma \ref{lemquadchar}, we quantify what it means for a quadratic character to be almost extremal, which may be of independent interest.  

In the cubic case, a consideration of the multiplicative functions involved will immediately generate a bound of $\N\ll d_K^{\frac{1}{4\sqrt{e}}+\epsilon}$ where $4\sqrt{e} = 6.59...$.  By studying almost extremal quadratic characters, we will show a modest improvement of $\N \ll d_K^{\frac{1}{6.64}}$.  We also give the following simple example in the biquadratic case here. Given moduli $q_1$ and $q_2$ where for simplicity we assume that $q_1 \asymp q_2 \asymp q$ for some $q$, the least quadratic non-residue for either $q_1$ or $q_2$ is $\ll q^{\frac{1-\delta}{4\sqrt{e}}}$ for some $\delta>\frac{7}{100}$.

\subsection{On residues}
This discussion is related to another interesting problem - that of finding upper bounds on the residue $\kappa$ of $\zeta_K(s)$ at $s=1$.  We remind the reader that the class number formula relates $\kappa$ to various algebraic invariants of $K$.  Specifically, let $r_1$ and $2r_2$ denote the number of real and complex embeddings of $K$, $h$ the class number, $R$ the regulator, and $\omega$ the number of roots of unity.  Then,
$$\kappa = \frac{2^{r_1}(2\pi)^{r_2}hR}{\omega \sqrt{d_K}}.$$  The best known explict upper bound is due to Louboutin who in \cite{Lou2000} showed that 
\begin{equation}
\kappa \leq \left( \frac{e \log d_K}{2 (l-1)}\right)^{l-1}.
\end{equation}We also refer to \cite{Lou2000} and \cite{Lou2001} for applications and connections of this type of result to other questions as well as references to previous works from Siegel as well as Lavrik and Egorov.  We will show a result of the from
\begin{equation*}
\kappa \leq \left( \frac{(1+o(1))e^\gamma \log d_K}{4l}\right)^{l-1},
\end{equation*}when $\frac{l}{\log d_K} = o(1)$ is small, and where $\gamma = 0.577...$ is Euler's constant.  See Theorem \ref{residueupperthm} for the exact result.

\subsection{Statement of Results}
We consider these problems from two different vantage points.  The first is via analysis of $L$-functions attached to the number field $K$, and the other stems from Vinogradov's work and work on multiplicative functions as in \cite{GS}.  It is interesting that the latter method, which gives us the best known bounds in the quadratic case, is not optimal for number fields of large degree.  Indeed, the first method will give us the best known upper bounds on the least prime that does not split for number fields of large degree and will also lead to such a result on the residue of the Dedekind zeta function.  Specifically, we will show in \S 2 that

\begin{thm} \label{main}
Let $K$ be a number field of degree $l$ and discriminant $d_K$.  Let $\N$ be the least prime that does not split completely in $K$.  Then
$$\N \ll_\epsilon d_K^{\frac{1+\epsilon}{4A(l-1)}}.
$$Here $A = \sup_{\lambda\geq 0}\frac{1-\frac{l}{l-1}e^{-\lambda}}{\lambda}$ satifies $A \geq 1-\sqrt\frac{2}{l-1} = 1+O(\frac{1}{\sqrt{l}}) \rightarrow 1$ as $l\rightarrow \infty$.
\end{thm}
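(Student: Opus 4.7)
\emph{Proof plan.}
Let $y=\N$. The hypothesis forces $\Lambda_K(p^k)=l\,\Lambda(p^k)$ on every prime power with $p<y$, so the Dirichlet series $-l\zeta'(s)/\zeta(s)+\zeta_K'(s)/\zeta_K(s)=\sum_n(l\Lambda(n)-\Lambda_K(n))\,n^{-s}$ has non-negative coefficients supported on integers with a prime factor $\geq y$, while having a simple pole of residue $-(l-1)$ at $s=1$. The strategy is to play this pole against the contribution from the zeros of $\zeta_K$ by applying Perron's formula on a free scale $x=y^\lambda$ with a smooth weight, and then to bound the resulting zero sum using the functional equation of $\zeta_K(s)$ together with elementary density information on its zeros.

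Concretely, taking a smooth test function $\phi$ of compact support with Mellin transform $\Phi$ of rapid vertical decay, Perron--Mellin inversion yields
\[
S(x):=\sum_n(l\Lambda(n)-\Lambda_K(n))\,\phi(n/x)=(l-1)x\,\Phi(1)+\sum_{\rho_K}\Phi(\rho_K)\,x^{\rho_K}-l\sum_{\rho}\Phi(\rho)\,x^{\rho}+(\text{archimedean tail}).
\]
The support condition gives $0\leq S(x)\leq l\sum_{n\geq y}\Lambda(n)\phi(n/x)$; a PNT-type computation with $\phi$ tied to the parameter $\lambda$ produces an upper bound whose leading coefficient is of shape $\tfrac{l}{l-1}e^{-\lambda}$ relative to $(l-1)x\Phi(1)$. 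The zero contribution is bounded by shifting the Mellin contour to $\Re s=\tfrac12$ via the functional equation of $\zeta_K(s)$, invoking the standard estimate $\sum_{\rho_K}(|\rho_K|^2+1)^{-1}\ll\log d_K+l$; with $\Phi$ chosen for rapid vertical decay this yields a bound of size $O(x^{1/2}(\log d_K)^{O(1)})$. It is precisely this square-root-of-$x$ saving that produces the factor-of-two improvement over the Vaaler--Voloch exponent $1/(2(l-1))$.

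Combining the two estimates and taking logarithms gives $(l-1)A(\lambda)\log y\leq\tfrac14\lambda\log d_K\,(1+o(1))$ with $A(\lambda)=(1-\tfrac{l}{l-1}e^{-\lambda})/\lambda$; dividing by $\lambda$ and optimizing over $\lambda\geq 0$ yields $\log y\leq(1+\epsilon)\log d_K/(4A(l-1))$, as desired. The asymptotic $A\geq 1-\sqrt{2/(l-1)}$ follows by plugging $\lambda=\sqrt{2/(l-1)}$ into the defining supremum and Taylor expanding. The main obstacle will be cleanly extracting the square-root zero-sum saving unconditionally, which requires a judicious choice of test function and careful bookkeeping of the log-discriminant factors propagated through the functional-equation contour shift; everything downstream is contour manipulation and the elementary calculus verification of the optimizer $\lambda^{\ast}\approx\sqrt{2/(l-1)}$.
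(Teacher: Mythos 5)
Your proposal takes a genuinely different route from the paper, and it contains a critical gap. The paper never writes an explicit formula with $\sum_{\rho}\Phi(\rho)x^{\rho}$ and never shifts a Mellin contour to $\Re s=\tfrac12$. Instead, it works entirely at a real point $\sigma_0>1$: starting from $-\zeta_K'/\zeta_K(\sigma_0)=\tfrac12\log d_K+\tfrac{1}{\sigma_0-1}-F(\sigma_0)+\cdots$, it lower-bounds the positive zero-term $F(\sigma_0)$ by applying Heath-Brown's Jensen-type inequality (Lemma 3.2 of \cite{DRHB}) to $f(s)=(s-1)\zeta_K(s)$ on a small circle centered at $\sigma_0$. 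The zeros enter only through $\sum'_\rho\Re\bigl(\tfrac{1}{\sigma_0-\rho}-\tfrac{\sigma_0-\rho}{R^2}\bigr)$, which is $\geq 0$ and therefore dropped; the $\tfrac14\log d_K$ then emerges from the boundary integral $\tfrac{1}{\pi R}\int_{\pi/2}^{3\pi/2}\cos\theta\log|f|\,d\theta$, where the left half-circle is handled by the convexity bound $\zeta_K(\sigma+it)\ll d_K^{(1-\sigma)/2}e^{l\mathcal D+Cl}$ and the computation $\tfrac{1}{2\pi}\int_{\pi/2}^{3\pi/2}\cos^2\theta\,d\theta=\tfrac14$. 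Nothing in the argument ever needs a pointwise bound on $\zeta_K'/\zeta_K$ on the critical line or on $x^{\rho}$ for individual zeros.

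The gap in your plan is the claim that shifting to $\Re s=\tfrac12$ and using $\sum_{\rho_K}(|\rho_K|^2+1)^{-1}\ll\log d_K+l$ yields a zero contribution of size $O(x^{1/2}(\log d_K)^{O(1)})$. Unconditionally this is false, and it is in fact essentially equivalent to GRH for $\zeta_K$. If you keep the zero sum as residues $\sum_{\rho_K}\Phi(\rho_K)x^{\rho_K}$, a single hypothetical zero with $\beta$ near $1$ contributes $\asymp x^{\beta}$, which the density estimate cannot suppress. If instead you try to push the integration contour past such zeros to $\Re s=\tfrac12$ without collecting residues, you cannot: $\zeta_K'/\zeta_K$ is not controlled there without a zero-free region. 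Either way there is no unconditional $x^{1/2}$ saving, and indeed if there were, your own Perron identity would immediately give $\log x\ll\log\log d_K$, i.e.\ the conditional bound $\N\ll(\log d_K)^{O(1)}$ of Murty -- which is far stronger than the theorem and certainly not provable unconditionally. The factor $\tfrac14$ in the paper is therefore not a ``square-root-of-$x$'' phenomenon at all; it is the boundary-integral constant in the Jensen/Borel--Carath\'eodory argument applied near $s=1$. To repair your plan you would need to abandon the contour shift and instead establish an unconditional upper bound of the form $-\zeta_K'/\zeta_K(\sigma_0)\leq(\tfrac14+o(1))\log d_K+\tfrac{1}{\sigma_0-1}+2l\mathcal D+O(l)$ for $\sigma_0$ slightly greater than $1$, which is exactly what the paper's Lemmas \ref{technicallem} and \ref{upperboundlem} supply. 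The subsequent optimization in $\lambda$ via $a(\lambda)=(1-\tfrac{l}{l-1}e^{-\lambda})/\lambda$ and the evaluation at $\lambda=\sqrt{2/(l-1)}$ in your last paragraph do match the paper's Lemma \ref{splitlem} and its application.
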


The dependence on $\epsilon$ may be quantified explicitly by
$$\N \ll \left(\frac{\log d_K}{l}\right)^2 d_K^{\frac{1+o(1)}{4A(l-1)}}.
$$
In the above, $o(1)$ denotes a quantity which tends to $0$ as either $l$ or $d_K$ grows.  It is illustrative here to consider two examples.  First, if we consider some sequence of number fields such that $d_K \leq C^l$ for some constant $C$, then we see that the least prime that does not split must be bounded by a constant.  This case does not appear in the previous work \cite{VaVo}.  Secondly, in the opposite case where $\frac{\log d_K}{l} \rightarrow  \infty$, then $\N \ll d_K^{\frac{1+o(1)}{4A}}$.  

\begin{rmk1}
The value of $A$ may be calculated for small $l$.  The result above beats the bound $d_K^{\frac{1}{2(l-1)}}$ when $l\geq 4$.  We comment that the best result in the case $l=2$ is still of the form (\ref{burgesseqn}).  The best result available in the case $l=3$ is also not of the form $d_K^{\frac{1}{2(l-1)}}$ but is the one described below in Theorem \ref{thmcubic}.

\end{rmk1}

Moreover, we also have the following upper bound for the residue of the Dedekind zeta function.
\begin{thm} \label{residueupperthm}
Let $\kappa$ be the residue at $s=1$ of the Dedekind zeta function of $K$, and let $d = \log d_K^{1/l}$.  Then
$$\kappa \ll \left(\frac{(\frac{1}{4}+B)e^{\gamma+\sqrt{\frac{2}{l}}}\log d_K}{l}\right)^{l-1}.
$$
where $B = \frac{2\log \log d}{\log d}+O\left(\frac{1}{\log d}\right)$. 
\end{thm}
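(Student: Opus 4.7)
The plan is to leverage Theorem~\ref{main} as input: since every rational prime $p < \N$ splits completely in $K$, the Dedekind zeta function factors as
\[
\zeta_K(s) = \prod_{p<\N}(1-p^{-s})^{-l}\cdot T(s), \qquad T(s) := \prod_{\substack{\mathfrak{p} \\ N(\mathfrak{p}) \geq \N}}(1-N(\mathfrak{p})^{-s})^{-1}.
\]
To extract $\kappa$ from this factorization I would use the approximation $\kappa = (s-1)\zeta_K(s)(1+o(1))$ at a point $s = 1 + c/\log d_K$, which holds because the Euler--Kronecker constant of $K$ satisfies $\gamma_K = O(\log d_K)$. The parameter $c$ is chosen small enough that the resulting error is absorbed into the $(1+o(1))$, yet large enough that the tail term $T(s)$ is damped.

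The first factor is handled by a uniform form of Mertens' theorem: for $s$ with $(s-1)\log\N = o(1)$,
\[
\prod_{p<\N}(1-p^{-s})^{-l} \leq \bigl(e^{\gamma}\log\N\,(1+o(1))\bigr)^{l},
\]
which is the origin of the constant $e^{\gamma}$ in the final bound. Substituting the quantitative form $\log\N \leq (1+o(1))\log d_K/(4A(l-1)) + 2\log(\log d_K/l)$ from Theorem~\ref{main} and expanding $1/A \leq e^{\sqrt{2/l}}(1+o(1))$ via $A \geq 1-\sqrt{2/(l-1)}$, the quantity $\log\N$ takes precisely the form $(\tfrac{1}{4}+B)\,e^{\sqrt{2/l}}\cdot(\log d_K)/l$, with $B = 2\log\log d/\log d + O(1/\log d)$ absorbing the $2\log(\log d_K/l)$ correction.

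The main obstacle is controlling the tail $T(s)$. The naive inequality $(1-N(\mathfrak{p})^{-s})^{-1} \leq (1-p^{-s})^{-1}$ only yields $\zeta_K(s) \leq \zeta(s)^{l}$, which reproduces an $l$-fold pole at $s=1$ instead of the genuine simple pole and loses a factor of size $l^{\,l-1}$. To recover the correct $(l-1)$-st power in the theorem, I would instead use a smoothed Perron-style integral
\[
\frac{1}{2\pi i}\int_{(c)}\zeta_K(s)\,G(s)\,x^{s-1}\,ds,
\]
and shift the contour past $s=1$. The pole contributes $\kappa\cdot G(1)$, while the shifted contour on $\Re s < 1$ is bounded via convexity for $\zeta_K$ coming from its functional equation; this is where the discriminant $d_K$ enters. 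Balancing the residue against the shifted contour, with $G$ and $x$ chosen so that $G(1)$ aligns with the Mertens estimate above, yields a bound of shape $\kappa \ll (e^{\gamma}\log\N)^{l-1}(1+o(1))$, and substituting the bound on $\log\N$ produces the statement. The fine optimization of the kernel $G$ is what calibrates the precise constants $(\tfrac{1}{4}+B)$ and $e^{\gamma+\sqrt{2/l}}$ appearing in the theorem.
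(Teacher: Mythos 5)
Your plan reverses the paper's logical flow, and the reversal introduces a real gap. The paper does not derive the residue bound from Theorem \ref{main}; rather, both Theorem \ref{main} and Theorem \ref{residueupperthm} are downstream consequences of the same input, namely the upper bound on $-\zeta_K'/\zeta_K(\sigma_0)$ in Lemma \ref{upperboundlem}. The passage from that bound to $\kappa$ is the ``greedy'' Lemma \ref{greedylem}: given the constraint $\sum_{n}\Lambda_K(n)n^{-\sigma}\le c\log d_K + (\sigma-1)^{-1}$, the sum $\log\zeta_K(\sigma)=\sum\Lambda_K(n)/(n^\sigma\log n)$ is maximized by front-loading the constraint (pretending all primes up to a computable $T$ split and then none do), and a partial-summation computation then bounds $\log\kappa$. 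No Perron integral or contour shift appears.

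The gap in your proposal is that a bound on $\N$ alone does not bound $\kappa$. Write $\zeta_K(s)=P(s)T(s)$ as you do, with $P(s)=\prod_{p<\N}(1-p^{-s})^{-l}$. Then $P$ is a finite Euler product, analytic at $s=1$, and $T(s)$ inherits the simple pole of $\zeta_K$, so $\lim_{s\to 1}(s-1)T(s)=\kappa/P(1)$. Multiplying your Mertens bound on $P$ by ``the contribution of $T$'' is therefore circular: the quantity you need to control is exactly $\kappa/P(1)$. To avoid circularity you would need an independent upper bound on $(s-1)T(s)$ near $s=1$, but knowing only that the first non-split prime is $<\N$ gives you no handle on this. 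Indeed the enemy is a field where $p=\N$ fails to split but all other primes up to some huge $y$ do split; then $\N$ is small while $\kappa$ can be near the extremal size. Your convexity-plus-contour-shift sketch cannot repair this, because the shifted contour for $\zeta_K$ on $\Re s<1$ is controlled only by a convexity bound of size $d_K^{(1-\sigma)/2+\epsilon}$, which is exponentially larger than the polynomial-in-$\log d_K$ quantity $\kappa$; nothing in your setup beats convexity, and the extra information the paper actually uses (the contribution of the zeros, via Heath-Brown's Lemma 3.2 inside Lemma \ref{upperboundlem}) does not enter your argument at all. Finally, even granting your desired intermediate estimate $\kappa\ll(e^\gamma\log\N)^{l-1}$, you never explain how the kernel $G$ produces the exponent $l-1$ rather than $l$; the paper gets this exponent structurally from the fact that one factor of $\zeta_K$ is $\zeta(s)$, absorbed in the $\log(\sigma-1)$ term of Lemma \ref{greedylem}.

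To repair the proof you should discard the factorization of $\zeta_K$ and instead prove Lemma \ref{greedylem} directly: set $\sigma=1+\alpha/\log d_K$ with $\alpha=4\sqrt{l}$, apply Lemma \ref{upperboundlem} with $c=\frac14+o(1)+\frac{2l\mathcal{D}+O(l)}{\log d_K}$, determine $T$ from $l\sum_{n\le T}\Lambda(n)n^{-\sigma}\ge c\log d_K + (\sigma-1)^{-1}$ using Rosser--Schoenfeld, and then bound $\log\kappa$ by $c\alpha + l\sum_{n\le T}\Lambda(n)/(n^\sigma\log n)+\log(\sigma-1)$. The Mertens-type constant $e^\gamma$ emerges from evaluating $\sum_{n\le T}\Lambda(n)/(n^\sigma\log n)$, and the factor $e^{\sqrt{2/l}}$ from the choice $\alpha=4\sqrt l$, not from any expansion of $1/A$.
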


In the case where $d_K$ grows faster than an exponential\footnote{By this, we mean that the statement $d_K \ll C^l$ is not true for any $C>0$.  An example would be the condition of Vaaler and Voloch immediately following (\ref{VVresult}).} in $l$, we have that $B = o(1)$.  Note also that since $d_K$ grows at least exponentially in $l$, $B$ is usually small.  However, the above result is not optimal for $d_K$ very small.  Rather, results like Hoffstein \cite{Hoffstein} and Bessassi \cite{Bessassi} optimize that particular case.  

\begin{rmk1}
The above results can be made explicit if desired but we choose not to do so for ease of exposition.  Improvements are possible in the coefficient in $B$ above as well as quantifications of the $\epsilon$ appearing in Theorem \ref{main}.

We also note that by applying a result of Stechkin \cite{Stech}, it is possible to prove the above results more explicitly, but replacing $\frac{1}{4}$ with $\frac{1-\frac{1}{\sqrt{5}}}{2} = 0.276... > 0.25 = 1/4$.  See Lemma 1 and environs for details.
\end{rmk1}

The utility of Vinogradov's method in the context of number fields has not been well understood.  We show in \S 3 that 

\begin{thm} \label{thmnegativresult}
Let $K$ be a number field of degree $l$ and discriminant $d_K$.  Let $f(n)$ be a real multiplicative function satisfying $0\leq f(p)\leq l$ on the primes and such that 
$$\sum_n \frac{f(n)}{n^s} = \zeta(s) \sum_n \frac{g(n)}{n^s},$$
valid for $\Re(s) >1$, for some multiplicative function $g(n)$ such that 
$$\sum_{n\leq x} g(n) = o(x)$$ for all $x>d_K^{1/2+o(1)}.$  
Then there exists some $p<d_K^{\frac{1}{2(l-1)}(1+O(l^{-1/2+\epsilon}))}$ such that $f(p)\neq l$.

Moreover, this is essentially the best possible result for large $l$.  To be specific, there exists a real multiplicative function satisfying all the properties above such that $f(p)=l$ for all $p<d_K^{\frac{1}{2(l-1)(1+O(l^{-1/2+\epsilon}))}}$.
\end{thm}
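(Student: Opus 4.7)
The argument has two parts, mirroring the statement. For the upper bound, the plan is to compare two estimates for $S(x) := \sum_{n\le x}f(n)$; for the tightness, the plan is to exhibit an explicit $f$.

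Upper bound. Let $y$ be the largest prime with $f(p)=l$ for all primes $p\le y$. Writing $f = 1 \ast g$ gives $g(p) = l-1$ for $p \le y$. The hyperbola identity
$$S(x) = \sum_{d\le x} g(d)\lfloor x/d\rfloor,$$
combined with the hypothesis $\sum_{m\le t} g(m) = o(t)$ for $t > d_K^{1/2+o(1)}$ and partial summation, yields the upper estimate $S(x) = o(x\log x)$ in the range $x \ge d_K^{1/2+o(1)}$. In the opposite direction, the nonnegativity of $f$ together with $f(p) = l$ for every $p \le y$ gives, by a Rankin/Selberg--Delange style argument, a lower bound $S(x)\gg_l x(\log y)^{l-1}$ once $x$ is moderately larger than $y$. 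Setting $x \asymp d_K^{1/2+o(1)}$ and comparing the two forces $y \ll d_K^{(1+O(l^{-1/2+\epsilon}))/(2(l-1))}$. Lemma~\ref{splitlem}, proved earlier in \S 2, will supply the precise form of this comparison; the error $O(l^{-1/2+\epsilon})$ arises from optimizing a free parameter (the length of the Selberg--Delange window), with a Cauchy--Schwarz step producing the characteristic $\sqrt{l}$ gain.

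Tightness. Set $y = d_K^{1/(2(l-1))(1+O(l^{-1/2+\epsilon}))}$ and define $f$ by the Dirichlet series
$$\sum_{n\ge 1} \frac{f(n)}{n^s} = \zeta_y(s)^l, \qquad \zeta_y(s) := \prod_{p \le y}(1 - p^{-s})^{-1}.$$
Then $f(p) = l$ for $p \le y$ and $f(p) = 0$ for $p > y$, so $0 \le f(p) \le l$, and the associated multiplicative $g$ has Dirichlet series $\zeta_y(s)^l/\zeta(s)$. The condition $\sum_{n\le x} g(n) = o(x)$ for $x > d_K^{1/2+o(1)}$ is verified by Perron's formula, shifting the contour into the classical zero-free region of $\zeta(s)$: on the shifted line $|\zeta_y(s)|^l\ll (\log y)^{O(l)}$ while $|\zeta(s)|^{-1}$ is under control, giving $\sum_{n\le x} g(n) \ll x \exp(-c\sqrt{\log x})(\log y)^{O(l)} = o(x)$ uniformly in the required range (since $\log x \ge (l-1)\log y$ up to lower order).

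The main obstacle I expect is obtaining the explicit error term $O(l^{-1/2+\epsilon})$. A naive version of the comparison in Part 1 yields a weaker loss, and recovering the $l^{-1/2+\epsilon}$ hinges on the sharp form of Lemma~\ref{splitlem} together with the parameter optimization sketched above. A parallel refinement on the tightness side can be obtained, if needed, by replacing the $y$-smooth truncation $\zeta_y(s)^l$ with a product $\zeta(s)L(s,\chi_1)\cdots L(s,\chi_{l-1})$ of $L$-functions whose combined analytic conductor is $d_K$, and applying iterated Polya--Vinogradov to bound $\sum_{n\le x} g(n)$ at the sharper threshold.
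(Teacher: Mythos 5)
Your plan diverges substantially from the paper's, and the key steps as stated have gaps. The paper proves this theorem in \S 3 entirely through the machinery of the differential--difference equation (\ref{diffsigeqn}) for $\sigma(u)$ (after reducing to the extremal $P$ via Proposition~\ref{propstructure}): it computes the Laplace transform of $\sigma$ explicitly (Lemma~\ref{Latranslem}), locates the saddle point in terms of the Lambert $W$ function (Lemma~\ref{extremalem}), controls the tails (Lemmas~\ref{boundsforlatranslem}, \ref{lemevalH}), evaluates the main term (Lemma~\ref{lemevalK}), and concludes that the first zero of the extremal $\sigma$ lies at $k + O(k^{1/2+\epsilon})$ with $k=l-1$. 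The error term $O(l^{-1/2+\epsilon})$ in the theorem is exactly the location of the saddle-point sign change. Lemma~\ref{splitlem}, which you invoke, plays no role here; it concerns $\sum \Lambda_K(n)/n^\sigma$ and feeds into Theorem~\ref{main}, not Theorem~\ref{thmnegativresult}.

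Concretely, two steps in your ``upper bound'' sketch do not go through. First, the hyperbola identity $S(x) = \sum_{d\le x} g(d)\lfloor x/d\rfloor$ together with $\sum_{m\le t} g(m)=o(t)$ \emph{only for $t > d_K^{1/2+o(1)}$} does not yield $S(x)=o(x\log x)$: the contribution from $d \le d_K^{1/2+o(1)}$ is uncontrolled, and if $g(p)=l-1$ on small primes this part alone is of size $\gg x\log^{l-1} y$, which for $x\asymp d_K^{1/2+o(1)}$ and $l\ge 3$ swamps $x\log x$. Second, the ``Rankin/Selberg--Delange'' lower bound $S(x)\gg x(\log y)^{l-1}$ implicitly requires control of $f$ on prime \emph{powers}, but the hypothesis only constrains $f(p)\in[0,l]$; without nonnegativity of $f(p^a)$ (or of $g(p^a)$) this lower bound can fail. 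The paper sidesteps both issues by working with the normalized quantity $\sigma(u)$ and the comparison principle of Proposition~\ref{propstructure}, which is precisely designed to handle the uncontrolled part of $P(t)$.

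On the tightness half, your choice $f$ with Dirichlet series $\zeta_y(s)^l$ (so $g(p)=l-1$ for $p\le y$, $g(p)=-1$ for $p>y$) is the natural extremal example and is in the same spirit as what the paper uses implicitly. But the verification via Perron is not obviously sufficient at the sharp threshold: on the shifted line one picks up a factor $|\zeta_y(s)|^l\asymp (\log y)^{\,c\,l}$, and at $\log x\asymp (l-1)\log y$ this is $\exp(c\,l\log\log y)$, which is not clearly dominated by $\exp(-c'\sqrt{\log x})$ once $l$ is comparable to $\sqrt{\log x}/\log\log x$ or larger. The paper again gets tightness from the saddle-point proposition (the sign change of $\xi(u)$ near $u=k$), which gives a uniform statement in $l$. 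If you want a self-contained construction you would need to keep track of the $l$-dependence in the contour shift much more carefully than ``$o(x)$ uniformly in the required range.''
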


Thus, the technique behind Theorem \ref{main} is aware of information that can not be matched solely through the multiplicative functions approach, despite the fact that this approach gives the best known result for the quadratic case $l=2$.

However, the natural extension of Vinogradov's method and in particular, the structure in \cite{GS} has the advantage that it can utilize more information about the interaction between different multiplicative functions.  This allows us to improve bounds on $\N$ in the case of cubic and biquadratic fields.  Specifically, we will show in \S \ref{seccubicbi} that

\begin{thm2} \label{thmcubic}
Let notation be as in Theorem \ref{main}.  If $K$ is a cubic field, then we have that
$$\N \ll d_K^{\frac{1}{6.64}}.
$$
\end{thm2}  
A similar idea will enable us to show in \S \ref{seccubicbi} that
\begin{thm2} \label{thmbiquad}
Let $K$ be biquadratic with moduli $q_1$ and $q_2$.  Then we have that
$$\N \ll (q_1q_2)^{\frac{0.146}{2}}.
$$Furthermore, if $q_1\asymp q_2$, 
$$\N \ll (q_1q_2)^{\frac{0.141}{2}}.
$$
\end{thm2}
As we explain in \S \ref{seccubicbi}, these results should be compared to the trivial bounds of $d_K^{\frac{1}{4\sqrt{e}}+\epsilon}$ in the cubic case, and $(q_1q_2)^{\frac{1}{8\sqrt{e}}}$ in the biquadratic case.  Numerically, the results above are respectable, but have not been completely optimized.  We would like to exhibit that an interesting interaction between multiplicative functions leads to better bounds, rather than to push for the best possible numerical result.

\subsection{Notation}
In the following, when we write $f = O(g)$ or equivalently $f \ll g$ for functions $f$ and $g$, we shall mean that there exists a constant $C$ such that $|f| \leq C|g|$.  In the case where $g$ is a function of $\epsilon$ where as usual, $\epsilon$ denotes an arbitrary positive number, $C$ is allowed to depend on $\epsilon$.  Unless otherwise stated, $C$ is absolute, and in particular, $C$ never depends on the number field $K$.  We will also use $o(1)$ to denote a quantity which tends to $0$ as either $d_k \rightarrow \infty$ or $l \rightarrow \infty$ except in \S 3, where we are not concerned with uniformity in $l$ and $o(1)$ shall denote a quantity which tends to $0$ as $d_K \rightarrow \infty$ and $l/\log d_K \rightarrow 0$.   

\section{Working with the Dedekind zeta function}

As usual, write $s = \sigma + it$.  In this section, we will usually denote by $\rho = \beta + i\gamma $ a zero of the Dedekind zeta function.  Let 
$$F(s) = \Re \sum_\rho \frac{1}{s-\rho} = \sum_\rho\frac{\sigma - \beta}{(\sigma - \beta)^2 + (t-\gamma)^2},
$$defined for all $s\neq \rho$.  As before, let $l = r_1+2r_2$ denote the degree of $K$ over $\mathbb{Q}$ and $r_1$ and $2r_2$ be the number of real and complex embeddings of $K$ respectively.
Let $\xi_K(s) = s(s-1)\frac{d_K}{4^{r_2}\pi^l}^{s/2} \Gamma(s/2)^{r_1}\Gamma(s)^{r_2} \zeta_K(s)$.  Then $\xi_K(s)$ is entire of order $1$ and has a Hadamard product of the form 
$$\xi_K(s) = e^{A+Bs}\prod_\rho \left(1-\frac{s}{\rho}\right)e^{s/\rho}.
$$  
Logarithmically differentiating $\xi(s)$ gives that
\begin{equation} \label{logdiffeqn}
 F(s) = \Re \left(\frac{1}{2} \log \frac{d_K}{2^{2r_2}\pi^l} + \frac{\zeta_K'}{\zeta_K}(s) + G(s) + \frac{1}{s} + \frac{1}{s-1}\right),
\end{equation}
where
$$G(s) = \Re \left( \frac{r_1}{2} \frac{\Gamma '}{\Gamma} \left(\frac{s}{2}\right) + r_2 \frac{\Gamma '}{\Gamma}(s)\right).
$$
In the above we have used that $\Re B = -\Re \sum_\rho \frac{1}{\rho}$.  (See (11) on pg. 82 of \cite{Da} in the case of $\zeta(s)$.  The proof for the general case is the same.)
We have that $$-\frac{\zeta_K'}{\zeta_K}(s) = \sum_{n\geq 1} \frac{\Lambda_K(n)}{n^s},
$$where $\Lambda_K(n) = 0$ if $n$ is not a power of a prime, and $0\leq \Lambda_K(p^r) \leq l\log p$.  Rewritting (\ref{logdiffeqn}) for $s=\sigma>1$ gives that
\begin{equation} \label{basiceqn}
\sum_{n\geq 1} \frac{\Lambda_K(n)}{n^\sigma} = \frac{1}{2} \log \frac{d_K}{2^{2r_2}\pi^l} + \frac{1}{\sigma-1} - F(\sigma) + G(\sigma) +\frac{1}{\sigma}.
\end{equation}
Then $F(\sigma) > 0$ and $\frac{\zeta_K'}{\zeta_K}(\sigma) < 0$ for $\sigma >1$.    This observation led Stark to his lower bounds on discriminants as in \cite{Stark}, and this will be our starting point.  Indeed, if we use that $F(\sigma)>0$ and that $G(\sigma) < 0$ for $\sigma$ close to $1$, we would have that for $1<\sigma<\frac{5}{4}$,
\begin{equation} \label{2ndbasiceqn}
\sum_{n\geq 1} \frac{\Lambda_K(n)}{n^\sigma} \leq \frac{1}{2}\log \frac{d_K}{2^{2r_2}\pi^n} + \frac{1}{\sigma-1}+1.
\end{equation}    

Note that $\Lambda_K(n)$ is maximized when $n$ is a prime that splits completely in $K$, and the inequality above is a statement of the form that $\Lambda_K(n)$ cannot be too large for many $n$.  With some work, this already leads to a bound of the form $\N \ll d_K^{\frac{1}{2(l-1)}(1+O(\frac{1}{\sqrt{l}}))}$, which is similar to the results of Murty \cite{Mu} and Vaaler and Voloch \cite{VaVo}.  Specifically, the following lemma holds.

\begin{lemma} \label{splitlem}
Suppose that for some quantity $c>0$, the bound
\begin{equation}\label{lem1eqn}
\sum_{n\geq 1} \frac{\Lambda_K(n)}{n^\sigma} \leq c\log d_K + \frac{1}{\sigma-1}
\end{equation}holds for all $1+\frac{1}{\log d_K}\leq \sigma\leq 1+\frac{10\sqrt{l}}{\log d_K}$.  Also let
$$a(\lambda) = \frac{1-\frac{l}{l-1}e^{-\lambda}}{\lambda},
$$and let $A = \sup_{\lambda\geq 0}{a(\lambda)}$.
Then 
$$\N \ll d_K^{\frac{c}{A(l-1)}(1+o(1))}.
$$
\end{lemma}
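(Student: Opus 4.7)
\medskip

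\noindent\textbf{Proof plan for Lemma \ref{splitlem}.}

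The plan is to set up a contradiction argument: assume $\N > X$ for a value of $X$ slightly larger than the target bound, derive a lower bound for $\sum_n \Lambda_K(n)/n^\sigma$ by exploiting the fact that every prime $p\leq X$ must split completely, compare against the hypothesized upper bound (\ref{lem1eqn}), and finally optimize in $\sigma$ (equivalently in a scale parameter $\lambda$) so that the residual factor matches $A$.

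First I would fix $X$ and parametrize $\sigma = 1 + \lambda/\log X$. If $\N > X$, then $\Lambda_K(p) = l\log p$ for every rational prime $p\leq X$, since each such $p$ decomposes into $l$ distinct prime ideals of norm $p$. Discarding higher prime-power contributions (which are nonnegative) gives
$$\sum_{n\geq 1}\frac{\Lambda_K(n)}{n^\sigma} \geq l\sum_{p\leq X}\frac{\log p}{p^\sigma}.$$
Using $\psi(t) = t + O(t/(\log t)^A)$ and partial summation, I would establish the classical identity
$$\sum_{p\leq X}\frac{\log p}{p^\sigma} = \frac{1-e^{-\lambda}}{\sigma-1} + O(1),$$
uniformly in the range of $\sigma$ we care about. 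This is the main analytic input; the estimate on $\psi(t)$ needs to be strong enough that the implicit constant does not spoil the asymptotic when $\lambda$ is as small as $1/\sqrt{l}$, but the polynomial-log error bound from PNT suffices.

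Plugging this lower bound into the hypothesis (\ref{lem1eqn}) and collecting terms yields
$$\frac{l(1-e^{-\lambda}) - 1}{\sigma - 1} \leq c\log d_K + O(l).$$
The key algebraic identity $l(1-e^{-\lambda}) - 1 = (l-1)\bigl(1 - \tfrac{l}{l-1}e^{-\lambda}\bigr)$ rewrites the left side as $(l-1)a(\lambda)/(\sigma-1) = (l-1)a(\lambda)\log X/\lambda \cdot (\sigma-1)^{-1}\cdot(\sigma-1) = (l-1)a(\lambda)\log X$. Hence
$$(l-1)\,a(\lambda)\,\log X \leq c\log d_K + O(l).$$

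Finally, I would choose $\lambda$ close to a maximizer of $a$; a direct calculation using $e^{-\lambda}\approx 1-\lambda+\lambda^2/2$ shows that $\lambda^\ast = \sqrt{2/(l-1)}$ achieves $a(\lambda^\ast) = 1 - \sqrt{2/(l-1)} + o(l^{-1/2})$, confirming the lower bound on $A$ stated in Theorem \ref{main}. More generally I simply take $\lambda$ such that $a(\lambda) \geq A(1-o(1))$. Dividing through gives
$$\log X \leq \frac{c\log d_K}{A(l-1)}(1+o(1)),$$
provided the error term $O(l)$ is negligible against $c\log d_K$, i.e., $l = o(\log d_K)$; otherwise the conclusion $\N\ll 1$ is already implied by $d_K\ll C^l$. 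Choosing $X$ slightly larger than this bound produces a contradiction, so $\N$ is at most this quantity.

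The main obstacle is ensuring that the chosen $\sigma$ actually lies in the admissible range $[1+1/\log d_K,\,1+10\sqrt{l}/\log d_K]$ of the hypothesis: since the (near-)optimal $\lambda^\ast\asymp 1/\sqrt{l}$ and we expect $\log X \asymp \log d_K/l$, we have $\sigma - 1 = \lambda^\ast/\log X \asymp \sqrt{l}/\log d_K$, comfortably inside the window—but this needs to be checked carefully, and is precisely why the upper cutoff $10\sqrt{l}/\log d_K$ (rather than an absolute constant) is built into the statement. Controlling the $O(1)$ and $O(l)$ error terms uniformly so that they collapse into the $(1+o(1))$ factor is the remaining bookkeeping issue.
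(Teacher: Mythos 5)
Your proposal follows the same route as the paper's proof: use the split hypothesis to deduce $\Lambda_K(n)=l\Lambda(n)$ on $n\le X$, estimate $l\sum\Lambda(n)n^{-\sigma}$ via the PNT, substitute $\sigma=1+\lambda/\log X$ to rewrite the hypothesis as $(l-1)a(\lambda)\log X\le c\log d_K+O(l)$, and optimize over $\lambda$. Your explicit check that the near-optimal $\lambda^\ast\asymp 1/\sqrt{l}$ keeps $\sigma$ inside the admissible window $[1+1/\log d_K,\,1+10\sqrt{l}/\log d_K]$ is a useful remark the paper leaves implicit, but the argument is the same.
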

\begin{proof}
If all primes split completely up to $x>2$, then $\Lambda_K(n) = l \Lambda(n)$ for all $n\leq x$ and where $\Lambda (n)$ is the usual von Mangoldt function.  Then by the Prime Number Theorem for $\mathbb{Q}\:$,
\begin{eqnarray*}
\sum_{n\leq x} \frac{\Lambda_K(n)}{n^\sigma} 
&=& \sum_{n\leq x} \frac{l\Lambda(n)}{n^\sigma} \\
&=&l\left(\int_1^{x}\frac{1}{t^\sigma}dt + O(1)\right) \\
&=& \frac{l}{\sigma-1}-\frac{lx^{1-\sigma}}{\sigma-1} + O(l).
\end{eqnarray*}  Thus we have from (\ref{lem1eqn}) that
$$\frac{l-1}{\sigma-1}-\frac{lx^{1-\sigma}}{\sigma-1} \leq c\log d_K + O(l)
$$
Set $\sigma = 1+\frac{\lambda}{\log x}$.  Then the above is the same as
$$\frac{(l-1)(\log x+O(1))}{\lambda}(1-\frac{l}{l-1}e^{-\lambda}) \leq c\log d_K + O(l).
$$We may assume that the $O(1) = o(\log x)$ since otherwise the result is obvious.  Thus rearranging the above, we have that
$$\log x \leq \frac{c+o(1)}{a(\lambda)(l-1)}\log d_K + O(1).
$$
We note that $a(\lambda)$ has a global maximum for $\lambda>0$.  If we let $A$ be that maximum, then the result follows immediately.
\end{proof}

A corresponding statement on upper bounds for $\kappa$ also results from considerations of this type.  This conforms to the intuition that in order to maximize $\kappa$, we should put as much weight as possible on the small primes in the sum in (\ref{2ndbasiceqn}).  In other words, the worst case senario is when all the small primes split.  To this end, we prove the following lemma.

\begin{lemma} \label{greedylem}
Assume that (\ref{lem1eqn}) holds as in Lemma 1 for some $\sigma =1+ \frac{\alpha}{\log d_K}$.  Assume that there exists some $T$ such that $l \sum_{n\leq T} \frac{\Lambda(n)}{n^\sigma} \geq c\log d_K + \frac{1}{\sigma-1}$.  Then,
$$\log \kappa \leq c\alpha + l\sum_{n\leq T}\frac{\Lambda(n)}{n^\sigma \log n} + \log (\sigma -1).
$$

\end{lemma}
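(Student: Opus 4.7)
The plan is to convert $\log \kappa$ into an expression evaluated at $s=\sigma$ using an integration trick, and then bound the resulting $\log \zeta_K(\sigma)$ by a simple greedy optimization argument using the pointwise constraint $\Lambda_K(n) \leq l\Lambda(n)$ and the budget (\ref{lem1eqn}).

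First, since $(s-1)\zeta_K(s)$ is holomorphic and nonzero at $s=1$ with value $\kappa$, the function $\log[(s-1)\zeta_K(s)]$ is holomorphic near $s=1$. Its derivative is $\frac{1}{s-1}-\sum_n \Lambda_K(n)/n^s$, which is bounded near $s=1$ because the simple pole of $\zeta_K'/\zeta_K$ cancels the $1/(s-1)$ term. Integrating from $1$ to $\sigma$ gives
\[
\log\kappa = \log(\sigma-1) + \log\zeta_K(\sigma) + \int_1^\sigma\left(\sum_n \frac{\Lambda_K(n)}{n^u} - \frac{1}{u-1}\right)du.
\]
Applying (\ref{lem1eqn}) (understood uniformly on $[1,\sigma]$, which is automatic in the applications since (\ref{lem1eqn}) flows from the uniform estimate (\ref{2ndbasiceqn})) bounds the integrand by $c\log d_K$, so the integral contributes at most $c\log d_K\cdot(\sigma-1) = c\alpha$.

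Next, I would bound $\log\zeta_K(\sigma) = \sum_n \Lambda_K(n)/(n^\sigma\log n)$ (an identity from the Euler product) by a greedy/extremal argument. Consider the linear-programming problem of maximizing $\sum_n \omega_n/(n^\sigma \log n)$ over nonnegative weights $\omega_n$ subject to the pointwise constraint $\omega_n \leq l\Lambda(n)$ and the budget $\sum_n \omega_n/n^\sigma \leq M := c\log d_K + 1/(\sigma-1)$ (the latter comes from (\ref{lem1eqn}) at $\sigma$). The marginal value per unit budget is $1/\log n$, which decreases in $n$, so a standard exchange argument shows that the optimum is obtained by saturating $\omega_n = l\Lambda(n)$ for $n$ as small as possible until the budget is spent. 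The hypothesis $l\sum_{n\leq T}\Lambda(n)/n^\sigma \geq M$ ensures the greedy solution exhausts its budget within $[2,T]$, so
\[
\log\zeta_K(\sigma) \leq l\sum_{n\leq T}\frac{\Lambda(n)}{n^\sigma\log n}.
\]
Combining this with the integral identity above yields the claim.

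The only step requiring real care is the greedy/exchange argument, and within it the marginal case where the budget runs out partway through the bucket at some $n = T'\leq T$; here one includes the full contribution of that last bucket, which only inflates the bound and is still absorbed into the sum over $n\leq T$. A minor technical point is that one needs (\ref{lem1eqn}) on the whole interval $[1,\sigma]$ to bound the integrand, rather than only at the single point $\sigma$; this is not a genuine obstacle because the bound in the envisaged applications is truly uniform, but it should be flagged.
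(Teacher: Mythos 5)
Your proposal is correct and follows the same overall strategy as the paper: the key split is exactly the identity
\begin{equation*}
\log\kappa = \log\bigl[(\sigma-1)\zeta_K(\sigma)\bigr] + \int_1^\sigma\left(\sum_{n}\frac{\Lambda_K(n)}{n^u} - \frac{1}{u-1}\right)du,
\end{equation*}
with the integral bounded by $c\alpha$ via (\ref{lem1eqn}) on $[1,\sigma]$, together with the greedy bound $\log\zeta_K(\sigma)\le l\sum_{n\le T}\Lambda(n)/(n^\sigma\log n)$. The only real difference is in how you establish the greedy bound: the paper runs a partial-summation argument against the envelope $\tilde S(t)=\min\{\sum_{n\le t} l\Lambda(n)/n^\sigma,\ c\log d_K + 1/(\sigma-1)\}$ and evaluates $\int_1^\infty \tilde S(t)/(t\log^2 t)\,dt$ directly, whereas you phrase the same extremal principle as a linear program (maximize $\sum\omega_n/(n^\sigma\log n)$ subject to $0\le\omega_n\le l\Lambda(n)$ and the budget $\sum\omega_n/n^\sigma\le M$) and appeal to an exchange argument based on the decreasing marginal value $1/\log n$. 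These are equivalent in content; the LP framing is perhaps more transparent conceptually, while the partial summation is more self-contained computationally. Your flag about needing (\ref{lem1eqn}) on all of $[1,\sigma]$ rather than only at $\sigma$ is apt and is indeed implicit in the paper's use of the phrase ``as in Lemma 1,'' where the hypothesis is a range of $\sigma$; near $u=1$ the inequality is harmless anyway since the $1/(u-1)$ term on the right dominates.
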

\begin{proof}
We first show that
\begin{equation}\label{greedylemeqn}
\log \zeta_K(\sigma) \leq l \sum_{n\leq T}\frac{\Lambda(n)}{n^\sigma \log n}.
\end{equation}

To this end, let $S(t) = \sum_{n\leq t}\frac{\Lambda_K(n)}{n^\sigma}$, and $\tilde{S}(t) = \min \{\sum_{n\leq t}\frac{l\Lambda(n)}{n^\sigma}, c\log d_K + \frac{1}{\sigma-1}\}.$  Essentially $\tilde{S}(t)$ is the version of $S(t)$ which grows at the fastest rate possible, and visibly $S(t) \leq \tilde{S}(t)$.    Note also that $\tilde{S}(t) = c\log d_K + \frac{1}{\sigma-1}$ is constant for $t\geq T.$
Since $$\log \zeta_K(\sigma) = \sum_{n\geq 1}\frac{\Lambda_K(n)}{n^\sigma\log n},$$ by partial summation,
\begin{eqnarray*}
\log \zeta_K(\sigma) 
&=& \int_1^\infty \frac{S(t)}{t\log^2t}dt\\
&\leq& \int_1^\infty \frac{\tilde{S}(t)}{t\log^2t}dt\\
&=& \int_1^{T} \frac{\tilde{S}(t)}{t\log^2t}dt + \frac{\tilde{S}(T)}{\log T}\\
&=& l \sum_{n\leq T} \frac{\Lambda(n)}{n^\sigma \log n},
\end{eqnarray*}and this proves (\ref{greedylemeqn}).
By (\ref{lem1eqn}), we have by integration that
\begin{eqnarray*}
\log \kappa - \log (\sigma-1) \zeta_K(\sigma) 
&\leq& c(\sigma-1)\log d_K  \\
&=&c\alpha,
\end{eqnarray*}as desired.
\end{proof}

Again, since (\ref{lem1eqn}) follows from (\ref{2ndbasiceqn}) with $c=\frac{1}{2}$, with some work the lemma above gives us a bound roughly of the form $\kappa \ll \left(\frac{\left(1+o(1)\right)e^\gamma\log d_K}{2(l-1)}\right)^{l-1}$, at least when $d_K$ is large when compared to $l$.  This is already an improvement over Louboutin's result when $\frac{l}{\log d_K}$ is small.  

It is clear from both Lemma \ref{splitlem} and \ref{greedylem} that we gain information on both $\N$ and $\kappa$ if we were able to extract non-trivial contribution from $F(\sigma)$ in (\ref{basiceqn}).  However, the discussion immediately following (\ref{basiceqn}) neglected the contribution of the zeros entirely. We now proceed to rectify that situation. There are a number of possible approaches to this, and the best seems to be due to Heath-Brown \cite{DRHB} in the case of the Dirichlet $L$-functions.  There are some minor technicalities in our case, which we resolve with the help of the following Lemma.

\begin{lemma2}\label{technicallem}
Let $\sigma_0 > 1+\frac{1}{\log d_K}$ and $\frac{1}{4}<R<\frac{1}{2}$.  Let $C_1$ be the half circle of radius $R$ centered at $\sigma_0$ with real part to the right of $\sigma_0$.  Let $\mathcal{D} = \log \frac{\log d_K}{l}$  Then 
$$\frac{1}{\pi R} \int_{C_1} |\log (s-1)\zeta_K(s)|ds \leq l\mathcal{D} + O(l).
$$
\end{lemma2}
\begin{proof}
Let $s = \sigma + it$ where $\sigma>1+\frac{1}{\log d_K}$.  Then 
$$|\log \zeta_K(s)| \leq |\log \zeta_K(\sigma)| \leq \log \zeta_K\left(1+\frac{1}{\log d_K}\right).
$$These inequalities follow upon comparing Dirichlet series and since the coefficients of $\log \zeta_K(\sigma)$ are positive.  We now claim that 
$$\log \zeta_K\left(1+\frac{1}{\log d_K}\right) \leq l\mathcal{D} + O(l).
$$

Our calculations in Lemma \ref{greedylem} gives us this bound almost immediately.  Specifically, we have from (\ref{2ndbasiceqn}) and (\ref{greedylemeqn}) in the proof of Lemma \ref{greedylem} that
$$\log \zeta_K(\sigma) \leq l\sum_{n\leq T}\frac{\Lambda(n)}{n^\sigma \log n}
$$provided that $l\sum_{n\leq T}\frac{\Lambda(n)}{n^\sigma} \geq \frac{\log d_K}{2} + \frac{1}{\sigma-1} + 1$.  Say that $\sigma = 1+\frac{1}{\log d_K}$.  Then \begin{eqnarray*}
l\sum_{n\leq T}\frac{\Lambda(n)}{n^\sigma} 
&\ge& le^{-\frac{\log T}{\log d_K}} \sum_{n\leq T}\frac{\Lambda(n)}{n}\\
&\ge& le^{-\frac{\log T}{\log d_K}} \log T + O(l).
\end{eqnarray*} Thus there is some constant\footnote{Later on, we will have a specific value of $C$ when we prove the Theorem \ref{residueupperthm}, but for our present purposes, it suffices to note that this is possible for some absolute constant $C$.} $C$ such that $l\sum_{n\leq T}\frac{\Lambda(n)}{n^\sigma} \geq \frac{\log d_K}{2} + \frac{1}{\sigma-1}$ for $T = d_K^{\frac{C}{l}}$.  Then we must have that $\log \zeta_K(\sigma) \leq l \log\log T + O(l) = l\log \frac{C\log d_K}{l} + O(l) = l \log \frac{\log d_K}{l} + O(l)$.  Note that our bounds here hold uniformly in $d_K$ and $l$.
\end{proof}

Now we are ready to prove the following.
\begin{lemma3} \label{upperboundlem}
For any $1+\frac{1}{\log d_K} < \sigma_0 \leq 1+\frac{10\sqrt{l}}{\log d_K}$ and $\mathcal{D} = \log \frac{\log d_K}{l}$, we have that
$$-\frac{\zeta_K '}{\zeta_K}(\sigma_0) \leq \left(\frac{1}{4}+o(1)\right)\log d_K + \frac{1}{\sigma_0-1} + 2l \mathcal{D} + O(l).
$$uniformly in $\sigma_0$.
\end{lemma3}
\begin{proof}
Let $f(s) = (s-1)\zeta_K(s)$.  Let $C_R$ denote the circle of radius $R$ with center $\sigma_0$ with no zeros of $f(s)$ on $C_R$.  Then $f(s)$ is analytic and we may apply Lemma 3.2 in \cite{DRHB} to get that
$$-\Re \frac{f'}{f}(\sigma_0) = {\sum_\rho}'\left(\frac{1}{\sigma_0-\rho} - \frac{\sigma_0-\rho}{R^2}\right) -\frac{1}{\pi R} \int_0^{2\pi} \cos \theta \log|f(\sigma_0+Re^{i\theta})|d\theta
$$where $\sum '$ denotes a sum over all zeros of $f$ within $C_R$.  This is related to Jensen's formula and we refer the reader to \cite{DRHB} for a proof.  

We now need to bound the integral above, which we split into two ranges.  The first is when $0\leq \theta \leq \pi/2$ and $3\pi/2\leq \theta\leq 2\pi$.  The second is when $\pi/2\leq \theta\leq 3\pi/2$.  In the first range Lemma \ref{technicallem} tells us that $\frac{1}{\pi R}\int_{C_1} |\log|f(\sigma_0+Re^{i\theta})| \leq l\mathcal{D} + O(l)$.

In the second range, we use the convexity bound $\zeta_K(\sigma + it)\ll d_K^{\frac{1-\sigma}{2}}e^{l\mathcal{D}+Cl}$ for some $C>0$.  Since $\cos \theta \leq 0$, we have that
\begin{eqnarray*}
\cos \theta \log|f(\sigma_0+Re^{i\theta})| 
&\geq& \cos \theta \frac{1-\sigma_0 -R\cos \theta}{2}\log d_K(1+o(1)) \\
&+& \cos \theta(l\mathcal{D} + Cl)\\
&\geq& -\cos \theta\frac{R\cos \theta}{2}\log d_K (1+o(1))+\cos \theta(l\mathcal{D} + Cl).
\end{eqnarray*}
In the above, we have used that $1-\sigma_0 = o(1)$.  Now, we may assume that $2/\pi<R<1$ so the contribution of the second term to the integral is $\leq l\mathcal{D} + Cl$.

The contribution of the first term to the integral is
$$\leq \left(\frac {\log d_K+o(1)}{2\pi R}\right) \left(\int_{\pi/2}^{3\pi/2}R\cos^2 \theta d\theta + o(1)\right)
=  \left(\frac{1}{4}+o(1)\right)\log d_K.
$$Hence
\begin{eqnarray*}
-\frac{\zeta_K '}{\zeta_K}(\sigma_0) 
&\leq& \frac{1}{\sigma_0-1} -\Re{\sum_\rho}'\left(\frac{1}{\sigma_0-\rho} - \frac{\sigma_0-\rho}{R^2}\right) + \frac{1+o(1)}{4}\log d_K \\
&+& 2l\mathcal{D} + O(l)\\
&\leq& \frac{1}{\sigma_0-1} + \frac{1+o(1)}{4}\log d_K+2l\mathcal{D} + O(l),
\end{eqnarray*}where we have used that
$$\Re \left(\frac{1}{\sigma_0-\rho} - \frac{\sigma_0-\rho}{R^2}\right)
= (\sigma_0-\beta)\left(\frac{1}{|\sigma_0-\rho|^2}-\frac{1}{R^2}\right) \geq 0.
$$
\end{proof}

\subsection{Proof of Theorem \ref{main}}
Theorem \ref{main} now follows immediately from Lemma \ref{splitlem} and Lemma \ref{upperboundlem} with $c = \frac{1}{4} + o(1) + \frac{2l\mathcal{D}+O(l)}{\log d_K}$ where $\mathcal{D} = \log \frac{\log d_K}{l}$ as before.  We have that for $d = d_K^{\frac{1}{l}}$ that
$$\frac{2l\mathcal{D}+O(l)}{\log d_K} = 2\frac{\log \log d}{\log d}+O\left(\frac{1}{\log d}\right)
$$Also
$$d_K^{\frac{2\log \log d+O(1)}{(l-1)\log d}} \ll (\log d)^2.
$$

We further need to verify that $A = \sup_{\lambda\geq 0}{\frac{1-\frac{l}{l-1}e^{-\lambda}}{\lambda}} \geq 1-\sqrt{\frac{2}{l-1}}$.  We have that
\begin{eqnarray*}
\frac{1-\frac{l}{l-1}e^{-\lambda}}{\lambda}
&=& \frac{1-e^{-\lambda}}{\lambda} - \frac{e^{-\lambda}}{(l-1)\lambda}\\
&\geq& 1-\frac{\lambda}{2} - \frac{1}{(l-1)\lambda}\\
&=&1-\sqrt{\frac{2}{l-1}},
\end{eqnarray*}upon setting $\lambda = \sqrt{\frac{2}{l-1}}$.

\subsection{Proof of Theorem \ref{residueupperthm}}
It remains to prove the upper bound on the residue $\kappa$ in Theorem \ref{residueupperthm}.  As before, set $d =\log d_K^{1/l}.$  We already have from Lemma \ref{greedylem} that with $\sigma = 1+\frac{\alpha}{\log d_K}$ and for any $T$ such that $l \sum_{n\leq T} \frac{\Lambda(n)}{n^\sigma} \geq c\log d_K + \frac{1}{\sigma-1}$ with $c = \frac{1}{4}+\frac{2\log \log d}{\log d}+O(\frac{1}{\log d})+o(1)$, then
\begin{eqnarray*}
\log \kappa &\leq& c\alpha + l\sum_{n\leq T}\frac{\Lambda(n)}{n^\sigma \log n} + \log (\sigma -1) \\
&\leq& c\alpha + \log (\sigma -1)+ l(\log \log T + \gamma + \frac{2}{\log^2 T}),
\end{eqnarray*}where the latter line follows from taking logarithms of (3.27) of \cite{RosScho}.  Set $\alpha = 4\sqrt{l}$ and recall that that $\sigma = 1+\frac{\alpha}{\log d_K}$.
We need to find the smallest admissible value of $T$.  Let $S(x) = \sum_{n\leq x}\frac{\Lambda(n)}{n} = \log x -C + E(x)$ for some constant $C$.  From \cite{RosScho}, we know $-\frac{1}{\log x} < E(x)   < \frac{1}{\log x}$.  We have that
\begin{eqnarray*}
\sum_{n\leq T}\frac{\Lambda(n)}{n^\sigma}
&=& \int_{2^-}^T \frac{1}{x^{\sigma-1}}d(S(x))\\
&=& \int_{2^-}^T \frac{1}{x^{\sigma}}dx + \frac{E(T)}{T^{\sigma-1}}\\
&=& \frac{1}{\sigma -1}(2^{\sigma-1} - T^{\sigma -1}) + \frac{E(T)}{T^{\sigma-1}}\\
&=& \log T + \frac{E(T)}{T^{\sigma-1}} + O((\sigma-1)T^{\sigma-1})
\end{eqnarray*} 
We see easily that $T\ll d_K^{1/l}$ so $(\sigma-1)T^{\sigma-1} = o(1)$.  We thus have that
$$\log T = \frac{\log d_K}{l}(c+\frac{1}{\alpha}) + R(T),
$$where $|R(T)| < \frac{1}{\log T}$.  If $\log T \geq \frac{\log d_K}{4l}$, we may absorb $R(T)$ into the $O(\frac{l}{\log d_K})$ term inside $c$ so that we write
$$\log T = \frac{\log d_K}{l}(c+\frac{1}{\alpha}).
$$Otherwise, $\log T\leq \frac{\log d_K}{4l}\leq \frac{\log d_K}{l}(c+\frac{1}{\alpha})$.

Either way, we have
\begin{eqnarray*}
\kappa &\leq& \exp(\sqrt{l}) \frac{4\sqrt{l}}{\log d_K}(e^\gamma \log T)^{l}\\
&\leq&  \frac{4e^\gamma c}{\sqrt{l}} \left(ce^{\gamma + \frac{2}{\sqrt{l}}} \frac{\log d_K}{l}\right)^{l-1},
\end{eqnarray*}where we have written $c+\frac{1}{\alpha} \leq ce^{\frac{1}{c\alpha}}$.  Let $B = \frac{2\log \log d}{\log d} + O(\frac{l}{\log d})$.  Then we have also that
$$\kappa \ll \left(\left(\frac{1}{4}+B\right)e^{\gamma + \frac{2}{\sqrt{l}}} \frac{\log d_K}{l}\right)^{l-1}
$$

Since $d_K$ grows at least as fast as an exponential in $l$, $B$ is always bounded.  As mentioned before, we are most interested here in the case when $d$ grows, so that $B = o(1)$.

\section{On multiplicative functions}
\subsection{Preliminaries}
Let $\zeta_K(s) = \sum_{n\geq 1}\frac{a(n)}{n^s}$ be the Dirichlet series for $\zeta_K(s)$.  For this section, set $f(n)$ to be the multiplicative function such that 
$$\frac{\zeta_K(s)}{\zeta(s)} = \zeta_K(s) \prod_p \left(1-\frac{1}{p^s}\right) = \sum_n \frac{f(n)}{n^s},
$$for $\Re s > 1$.  Note that at primes, $f(p) = a(p) - 1$.  We first note that $f(n)$ exhibits cancellation at $d_K^{1/2+o(1)}$.  This argument is a standard on wherein we examine the Dirichlet series $D(s) := \frac{\zeta_K(s)}{\zeta(s)} = \sum_{n\geq 1} \frac{f(n)}{n^s}$.  Then the standard zero free region for $\zeta(s)$ is sufficient to find cancellation using Perron's formula.  

The question of bounding the least non-split prime can be converted to a more general question involving $f(n)$.  To be precise, knowing that $f(n)$ exhibits cancellation at $d_K^{1/2+o(1)}$, what is the maximum $y$ such that $f(p) = l-1$ for all $p\leq y$?  

We now collect some facts about multiplicative functions which will be useful for the remainder of this section.  Since the applications will be towards proving Theorems \ref{thmnegativresult}, \ref{thmcubic} and \ref{thmbiquad}, we will not take the same care to prove uniformity in $l$ as in the previous results.  The following are essentially culled from the work of Granville and Soundararajan \cite{GS}.  Granville and Soundararajan proved their results for the case where $|f(n)| \leq 1$, but the proofs extend to our case with very minor modifications.  We summarize the results, and the required modifications to the proofs below.

Let $f(n)$ be the multiplicative function defined above with $-1\leq f(p) \leq k$.  Here, $k=l-1$ where $l$ is the degree of our number field $K$.  Fix some $y\geq 2$ such that $f(p)=k$ for all $p\leq y$.  Note that this implies that all $y$ smooth numbers $n$ satisfy $f(n) = d_k(n)$, where the latter is the number of ways of writing $n$ as a product of $k$ numbers.  Then define
$$\sigma(u) = \frac{1}{y^u \log^{k-1} y}\sum_{n\leq y^u} f(n),
$$and
$$P(u) = \frac{1}{y^u}\sum_{p\leq y^u} f(p)\log p.
$$Then there are two related ways to express the relationship between $\sigma(u)$ and $P(u)$.  

First say that $\sig$ satisfies the convolution equation
\begin{equation}\label{eqnconvolution}
u\sig(u) = \sig * P(u) = \int_0^u \sig(u-t)P(t)dt,
\end{equation}
for $u>1$ subject to $\sig(u) = u^{k-1}$ for $u\leq 1$.  Then for our case, we will have that $\sig(u) = \sigma(u) + o(1)$.  The proof of this when $|f(n)|\leq 1$ is contained in \S 4 of \cite{GS}, and the proof for our case is almost the same.  In the proof of Proposition 4.1 in \cite{GS}, we define the multiplicative function $g(n)$ satisfying $g(p^k) = f(p^k) - f(p^{k-1})$ for all prime powers.  The non-negative function $|g(n)|$ still satisfies the hypothesis of Theorem 2 in \cite{HR}, which give that
$$\sum_{n\leq x} |g(n)| \leq k \frac{x}{\log x} \sum_{n\leq x}\frac{|g(n)|}{n}\left(1+O\left(\frac{1}{\log x}
\right)\right).
$$  The only modification in the proofs thereafter would be to replace error terms of the form $O(A)$ by $O(kA)$.  

Next, we also have a inclusion exclusion relationship.  To be specific, let 
\begin{equation}\label{eqnintegrals}
I_j(u) = \int_{\substack{t_1+...+t_j \leq u\\ t_i\geq 1}} \left(\frac{u-\sum_{i=1}^jt_i}{u}\right)^{k-1}\prod_{i=1}^j \frac{k-P(t_i)}{t_i}dt_1...dt_j.
\end{equation}  Then
\begin{equation}\label{eqninclusionexclusion}
\sig(u) = u^{k-1}\sum_{j=0}^\infty \frac{(-1)^j}{j!}I_j(u),
\end{equation}
where we set $I_0 = 1$.  The above sum is finite since $I_j(u) = 0$ for $u\leq j$.  We digress briefly to elucidate this inclusion exclusion relationship.  We have that $\sig(u) \leq u^{k-1}+o(1)$ since $f(n) \leq d_k(n)$.  Now, if $p\geq y$, note that $\sum_{\substack{n\leq y^u\\p|n\\p\geq y}} 1 = \left(\sum_{\substack{n\leq y^u\\p|n\\p^2\not|\: n\\p\geq y}}1\right)  (1+O(1/y))$, so
\begin{eqnarray*}
\sum_{n\leq y^u}f(n)
&\geq& y^u \log^{k-1} (y^u)(1+o(1))-\sum_{y\leq p\leq y^u} \sum_{\substack{n\leq y^u\\p|n}} (d_k(n) - f(n))\\
&\geq& y^u \log^{k-1} (y^u)(1+o(1))-\sum_{y\leq p\leq y^u} \sum_{m\leq y^u/p} d_k(m)(k-f(p))\\
&\geq& y^u \log^{k-1} (y^u)(1+o(1))-\sum_{y\leq p\leq y^u} (k-f(p))\frac{y^u}{p}\left(\log \frac{y^u}{p}\right)^{k-1}.
\end{eqnarray*}

An appropriate application of summation by parts brings this to $\sigma(u) \geq u^{k-1} (1-I(1)+o(1))$, and one can derive (\ref{eqninclusionexclusion}) in this manner.  However, we will relate this independently to the convolution equation (\ref{eqnconvolution}).  

For fixed $P(t)$, note that the solution $\sig(u)$ to (\ref{eqnconvolution}) is unique by the same proof as Theorem 3.3 in \cite{GS}.  Thus to prove (\ref{eqninclusionexclusion}), it suffices to show that $u^{k-1}\sum_{j=0}^\infty \frac{(-1)^j}{j!}I_j(u)$ satisfies the convolution equation (\ref{eqnconvolution}).  The calculation here is similar to Lemma 3.2 in \cite{GS} and the main step is checking that 
\begin{equation}\label{eqnmainstepinclusionexclusion}
k*J_j(u) = uJ_j(u) - j((k-P)*J_{j-1})(u),
\end{equation}
where $J_j(u) = u^{k-1}I_j(u)$.  This is because (\ref{eqnmainstepinclusionexclusion}) immediately implies that 
$$u\sum_{j=1}^\infty \frac{(-1)^j}{j!}J_j(u) + u^{k} = u^{k} + k* \sum_{j=1}^\infty \frac{(-1)^j}{j!}J_j(u) - \sum_{j=0}^\infty \frac{(-1)^j}{j!}((k-P)*J_{j-1})(u)$$  
which becomes (\ref{eqninclusionexclusion}) upon noting that $u^k = k*J_0$.  

Some of the details in proving (\ref{eqnmainstepinclusionexclusion}) differ slightly from that in \cite{GS} so we will provide the proof in the Lemma below.

\begin{lemma5}\label{leminclusionexclusion}
For $J_j(u)$ defined as above, $k*J_j(u) = uI_j(u) - j((k-P)*J_{j-1})(u)$.
\end{lemma5}

\begin{proof}
For notational convenience, set $S = \sum_{i=1}^jt_i$.  Then
\begin{eqnarray*}
k*J_j(u) 
&=& \int_0^uk\int_{\substack{S \leq t\\ t_i\geq 1}} \left(t-S\right)^{k-1}\prod_{i=1}^j \frac{k-P(t_i)}{t_i}dt_1...dt_jdt\\
&=& \int_{\substack{S \leq u\\ t_i\geq 1}} \prod_{i=1}^j \frac{k-P(t_i)}{t_i}\int_S^u k\left(t-S\right)^{k-1} dt dt_1...dt_j\\
&=& \int_{\substack{S \leq u\\ t_i\geq 1}} \prod_{i=1}^j \frac{k-P(t_i)}{t_i}\left(u-S\right)^{k-1}\left(u-S\right) dt_1...dt_j\\
&=& uJ_j(u) - j \int_{\substack{t_1+...+t_{j-1} \leq u-t_j \leq u\\ t_i\geq 1}} t_j \times \\
&&\prod_{i=1}^j \frac{k-P(t_i)}{t_i}\left(u-t_j - \sum_{i=1}^{j-1}t_i\right)^{k-1} dt_1...dt_j\\
&=& uJ_j(u) - j (k-P)*J_{j-1}(u).
\end{eqnarray*}
\end{proof}

Henceforth, by an abuse of notation, we write $\sigma(u)$ for $\sig(u)$ as well, and suppress the $o(1)$ error.  Frequently, it will be useful to know that the minimal value of $P(t)$ gives the earliest cancellation in $\sigma(t)$.  The following Proposition tells us this.  For an alternate proof, see also Lemma 3.4 of \cite{GS}.  

\begin{prop}\label{propstructure}
Suppose that we have two multiplicative functions $f$ and $f^\sharp$.  Let $P(u) = \frac{1}{y^u}\sum_{p\leq y^u}f(p)\log p$ and $P^\sharp(u)=\frac{1}{y^u}\sum_{p\leq y^u}f^\sharp(p)\log p$.  Define $\sigma(u)$ and $\sigma^\sharp(u)$ to be the solutions to (\ref{eqnconvolution}) for $P(u)$ and $P^\sharp(u)$ respectively.  Further suppose that $P(u) = P^\sharp(u)$ for $u\leq 1$, and that $P(u) \leq P^\sharp(u)$ always.  Let $u_0$ be the first zero of $\sigma(u)$.  Then for $u\leq u_0$, $0\leq \sigma(u) \leq \sigma^\sharp(u)$.  
\end{prop}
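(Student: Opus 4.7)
I set $H := \sigma^\sharp - \sigma$. The lower bound $\sigma \geq 0$ on $[0, u_0]$ is immediate from the choice of $u_0$ as the first zero of $\sigma$ (together with $\sigma(u) = u^{k-1} \geq 0$ on $[0,1]$), so the real content is $H \geq 0$ on $[0, u_0]$. Since $P \equiv P^\sharp$ on $[0,1]$, uniqueness of solutions to the convolution equation \eqref{eqnconvolution} (the analogue of Theorem 3.3 of \cite{GS} invoked in the excerpt) gives $\sigma \equiv \sigma^\sharp \equiv u^{k-1}$ on $[0, 1]$, so $H \equiv 0$ there.

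Subtracting the convolution equations for $\sigma^\sharp$ and $\sigma$ yields
\[
uH(u) = \int_0^u H(u-t)\,P^\sharp(t)\,dt + \int_0^u \sigma(u-t)\,\bigl(P^\sharp(t)-P(t)\bigr)\,dt,
\]
whose second integral is pointwise non-negative on $[0, u_0]$, since $\sigma \geq 0$ there and $P^\sharp \geq P$ by hypothesis. To propagate $H \geq 0$ beyond $[0, 1]$ I decompose $P^\sharp = k - Q^\sharp$, where $Q^\sharp \geq 0$ and $Q^\sharp \equiv 0$ on $[0, 1]$, and set $\Phi(u) := \int_0^u H(s)\,ds$. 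The identity rearranges to
\[
u\Phi'(u) + \int_1^u \Phi'(u-t)\,Q^\sharp(t)\,dt = k\,\Phi(u) + R(u),
\]
where $R(u) \geq 0$, and on any interval where $H \geq 0$ every term on both sides of this rearranged identity is non-negative.

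I conclude by contradiction. Suppose $v \in (1, u_0]$ is the infimum of points where $H < 0$; then $H \geq 0$ (hence $\Phi \geq 0$) on $[0, v]$ and $H(v) = \Phi'(v) = 0$. For $u$ just past $v$, a Gronwall-type comparison with the model ODE $u\phi'(u) = k\phi(u)$ (whose solutions are $C u^k$) applied to the rearranged identity forces $\Phi(u)u^{-k}$ to be non-decreasing in a right-neighborhood of $v$. Since $\Phi(v) \geq 0$, this yields $\Phi \geq 0$ and hence $H = \Phi' \geq 0$ immediately past $v$, contradicting the defining property of $v$. The main obstacle throughout is the possible negativity of $P^\sharp(t)$ for $t > 1$, which prevents a direct positivity argument on $\int_0^v H(v-t)\,P^\sharp(t)\,dt$; the decomposition $P^\sharp = k - Q^\sharp$ isolates the non-negative part of $P^\sharp$ and is precisely what makes the Gronwall comparison feasible.
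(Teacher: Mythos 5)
The key step of your argument---the claim that ``a Gronwall-type comparison with the model ODE $u\phi'(u) = k\phi(u)$ applied to the rearranged identity forces $\Phi(u)u^{-k}$ to be non-decreasing in a right-neighborhood of $v$''---does not follow. Your rearranged identity reads
$$u\Phi'(u) + \int_1^u \Phi'(u-t)\,Q^\sharp(t)\,dt = k\,\Phi(u) + R(u),$$
so $u\Phi'(u) - k\Phi(u) = R(u) - \int_1^u \Phi'(u-t)\,Q^\sharp(t)\,dt$. For $u$ just past $v$ (say $u \in (v, v+1)$), the integral $\int_1^u \Phi'(u-t)\,Q^\sharp(t)\,dt = \int_0^{u-1} H(s)\,Q^\sharp(u-s)\,ds$ involves $H$ only on $[0, u-1] \subset [0,v]$, where $H \geq 0$ by hypothesis; so this integral is \emph{non-negative}. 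But that is the wrong sign: it gives $u\Phi'(u) \leq k\Phi(u) + R(u)$, an \emph{upper} bound on $u\Phi'(u)$, whereas to conclude that $\Phi(u)u^{-k}$ is non-decreasing you need the \emph{lower} bound $u\Phi'(u) \geq k\Phi(u)$. Moreover, evaluating the identity at $u=v$ (where $H(v)=0$) shows $\int_1^v H(v-t)\,Q^\sharp(t)\,dt = k\Phi(v) + R(v)$, which can be of the same order as the terms you would like to dominate, so there is no smallness to exploit. The observation that ``every term on both sides is non-negative'' is an identity between sums of non-negative quantities; it gives no monotonicity, and in fact nothing prevents $u\Phi'(u) < k\Phi(u)$ (or even $\Phi'(u) < 0$) just past $v$. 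The decomposition $P^\sharp = k - Q^\sharp$ isolates the positive part of the kernel, but moves it to the side where its positivity works \emph{against} you.

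The underlying difficulty is genuine: the Volterra kernel $P^\sharp$ takes negative values for $t > 1$, so positivity is not automatically preserved by the convolution operator, and a direct ODE comparison cannot be expected to close. The paper avoids this entirely by a perturbative argument: it reduces (via $L^2$-density) to the case $P^\sharp = P + \delta\,1_{(a,a+\epsilon)}$ and computes the first-order change in $\sigma$ directly from the inclusion-exclusion expansion $\sigma(u) = u^{k-1}\sum_j \frac{(-1)^j}{j!} I_j(u)$, showing that $\sigma^\sharp(u) - \sigma(u)$ equals a positive multiple of $\sigma(u-a) + O(\epsilon)$, which is positive by the definition of $u_0$. If you want a proof along your lines, you would need a mechanism that controls the sign of $\int_1^u H(u-t)Q^\sharp(t)\,dt - R(u)$, and I do not see how to obtain that from the hypotheses alone.
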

\begin{proof}
We use $I_j(u)$ and $I_j^\sharp(u)$ to denote the various integrals defined as in (\ref{eqnintegrals}).  Further let $1_{(a, a+\epsilon)}(t)$ denotes the indicator function of the small interval $(a, a+\epsilon)$.  Without loss of generality, it suffices to prove the result in the case where $P^\sharp(t) = P(t) + \delta 1_{(a, a+\epsilon)}(t)$ for all $\delta>0$, all $a>1$ and $\epsilon$ arbitrarily small.  This is because linear combinations of functions of the form $\delta 1_{(a, a+\epsilon)}(t)$ are $L^2$ dense.  For notational convenience, set $S(t, u) = S(t) = \frac{(k-P(t))}{t}$ and $Q(t, u) = Q(t) =  \frac{\delta 1_{(a, a+\epsilon)}(t)}{t}$.  We may also assume that $u>1+a$ since otherwise $\sigma(u) = \sigma^\sharp(u)$.  Now fix some $1+a<u<u_0$, and say that $N \geq u$ is the smallest such integer.  We have that 
\begin{eqnarray*}
\textstyle
&&\sigma^\sharp(u) - \sigma(u)\\
&=&u^{k-1}\sum_{j=0}^N \frac{(-1)^j}{j!} (I_j^\sharp(u) - I_j(u))\\
&=&\sum_{j=1}^N \frac{(-1)^j}{j!} \int_{\substack{t_1+...+t_j \leq u\\ t_i\geq 1}} \left(u-\sum_{i=1}^jt_i\right)^{k-1} \\
&&\times\left(\prod_{i=1}^j \left(S(t_i)-Q(t_i)\right) - \prod_{i=1}^j \left(S(t_i)\right)\right) dt_1...dt_j\\
&=&\sum_{j=1}^N \frac{(-1)^{j-1}}{(j-1)!}\left(\mathcal{T}_j + 
+O(\epsilon^2)\right)
\end{eqnarray*}where $\mathcal{T}_j = \int_{\substack{t_1+...+t_j \leq u\\ t_i\geq 1}} Q(t_1)\left(u-\sum_{i=1}^jt_i\right)^{k-1} \prod_{i=2}^j S(t_i)dt_1...dt_j$.  Here, we have used that integrals containing two factors of $Q$ like
$$\int_{\substack{t_1+...+t_j \leq u\\ t_i\geq 1}} Q(t_1)Q(t_2)\prod_{i=3}^j S(t_i)dt_1...dt_j$$ are $O(\epsilon^2)$.  The terms containing one factor of $Q$ are the same by symmetry.  We now note that
\begin{eqnarray*}
&&\mathcal{T}_j\\
&=&\int_a^{a+\epsilon}Q(t_1) \left(u-\sum_{i=1}^jt_i\right)^{k-1}\int_{\substack{t_1+...+t_j \leq u\\ t_i\geq 1}} \prod_{i=2}^j S(t_i)dt_1...dt_j\\
&=&\int_a^{a+\epsilon}Q(t_1) dt_1\int_{\substack{t_2+...+t_j \leq u-a\\ t_i\geq 1}} \left(u-a-\sum_{i=2}^jt_i\right)^{k-1} \prod_{i=2}^j S(t_i)dt_2...dt_j\\
&+&O(\epsilon)\int_a^{a+\epsilon}Q(t_1) dt_1\\
&=&\int_a^{a+\epsilon}Q(t_1) dt_1 \left( u^{k-1}I_{j-1}(u-a) +O(\epsilon)\right)
\end{eqnarray*}
In the above, the $O(\epsilon)$ arises from replacing instances of $t_1$ by $a$ and using that $a\leq t_1\leq a+\epsilon$.  Combining the above with the previous equation gives us that
$$\sigma^\sharp(u) - \sigma(u) = \int_a^{a+\epsilon}Q(t_1) dt_1 \left(\frac{u^{k-1}}{(u-a)^{k-1}}\sigma(u-a) + O(\epsilon)\right).
$$If we pick $\epsilon$ to be sufficiently small, the latter is positive since $\int_a^{a+\epsilon}Q(t_1) dt_1>0$ and $\sigma(u-a)>0$.
\end{proof}
\begin{rmk2}
Actually, wherever we use this result, we have that $f^\sharp(p) \geq f(p)$.  When this is true, there is an alternative argument which we now sketch.  Let $g(n)$ be the multiplicative function defined by $f^\sharp  = f*g$, that is $f^\sharp(n) = \sum_{d|n}f(d)g(n/d)$.  Then since $f^\sharp(p) = f(p)+g(p)$, we must have $g(p)\geq 0$.  Hence $\sum_{n\leq x}f^\sharp(n) = \sum_{n\leq x}\sum_{d|n}f(d)g(n/d) = \sum_{d\leq x}f(d) \sum_{n\leq x/d}g(n)$.  One may then argue that the contribution from values of $g$ on the prime powers is benign and so the latter is an upper bound for $\sum_{n\leq x}f(n)$.  
\end{rmk2}

\subsection{Generalization of Vinogradov's method}
By Proposition \ref{propstructure}, we only need consider the case where $P(u) = k$ for $u\leq 1$, and $P(u) = -1$ otherwise.

By the convolution equation (\ref{eqnconvolution}), we get that $\sigma(u)$ satisfies the following differential difference equation:
\begin{equation}\label{diffsigeqn}
u\sigma '(u) + (1-k)\sigma(u)+(k+1)\sigma(u-1) = 0.
\end{equation}

\begin{lemma5} \label{lembasizero}
Say that $u_0$ is a zero of $\sigma(u)$.  Then $u_0 \gg k/\log k$.  
\end{lemma5}
\begin{proof}
Without loss of generality, we may suppose that $u_0$ is minimal.  By a change of variables $\tau(u) = \sigma(u)u^{1-k}$, we derive from (\ref{diffsigeqn}) that 
$$\tau '(u) = -(k+1)\left(1-\frac{1}{u}\right)^k \tau(u-1).
$$We see immediately that $\tau$ is positive and decreasing on $[0, u_0)$ so $-\tau '(u) \leq (k+1)\left(1-\frac{1}{u}\right)^k \ll (k+1) e^{-k/u}$.   The result follows since by mean value theorem, $1 \ll (u_0-1) (k+1) e^{-k/u}$ for some $u\in[1, u_0)$.
\end{proof}

This allows us to say that cancellation occurs later than $k/\log k$ but we require finer analysis in order to obtain that it must occur very near $k$.  For this, we use the saddlepoint method.

\subsection{The saddlepoint method}
Let $\hat{\sigma}(s) = \int_0^\infty \sigma(t) e^{-st} dt$ denote the Laplace transform of $\sigma(t)$.  In Lemma \ref{Latranslem} below, we will show that $\hat{\sigma}(s)$ can be analytically continued to all of $\mathbb{C}$.  Thus, by Laplace inversion
\begin{equation}\label{laplaceinverseeqn}
\sigma(u) = \frac{1}{2\pi}\int_{-\infty}^\infty \hat{\sigma}(s) e^{us} dt
\end{equation}
 where $s=x+it$ for fixed $x$.  The idea of the saddlepoint method is that the integral for $\sigma(u)$ above is dominated by a small interval where the argument of the integrand changes slowly.  First, we need to obtain a workable form for $\hat{\sigma}(s)$.  Our approach will be similar to the analysis of the classic Dickman's function in \S 5.4 of \cite{Ten}.

\begin{lemma5}\label{Latranslem}
\begin{eqnarray*}
\hat{\sigma}(s) &=& (k-1)! s e^{(k+1)(I(-s)+\gamma) } \\
&=& \frac{(k-1)!}{s^k} e^{-(k+1)J(s)},
\end{eqnarray*}
where $I(s) = \int_0^s \frac{e^t-1}{t}dt$, $J(s) = \int_0^{\infty} \frac{e^{-(s+t)}}{s+t}dt$ and $\gamma$ is Euler's constant.
\end{lemma5}
Note that $J(s)$ only has holomorphic extension to $\mathbb{C}\;\backslash (-\infty, 0]$ and the purpose of writing $\hat{\sigma}(s)$ in terms of $I(-s)$ is to analytically continue the transform to all of $\mathbb{C}$\;.
\begin{proof}
Note that a change of variables $t=v/s$ in the definition of the Laplace transform gives us that
$$s\hat{\sigma}(s) = \int_0^\infty e^{-v}\sigma(v/s)dv.$$
Differentiating both sides with respect to $s$ gives
\begin{eqnarray*}
\frac{d}{ds} s\hat{\sigma}(s) 
&=& \frac{1}{s} \int_0^\infty e^{-v}(-(v/s)\sigma '(v/s))dv\\
&=& \frac{1}{s} \int_0^\infty e^{-v}\left((k+1)\sigma \left(\frac{v}{s} -1\right)-(k-1)\sigma(v/s) \right)dv\\
&=& (k+1)e^{-s}\hat{\sigma}(s)-(k-1)\hat{\sigma}(s),
\end{eqnarray*}upon changing variables again.
Solving the differential equation above for $s\hat{\sigma}(s)$ gives
$$s\hat{\sigma}(s) = C\frac{e^{-(k+1)J(s)}}{s^{k-1}},
$$for some constant $C$.  We have that $\lim_{s\rightarrow \infty}J(s) = 0$ so
$$\lim_{s\rightarrow \infty} s^k \hat{\sigma}(s) = C.
$$On the other hand,
\begin{eqnarray*}
\lim_{s\rightarrow \infty} s^k \hat{\sigma}(s)
&=& \lim_{s\rightarrow \infty} s^{k-1} \int_0^\infty e^{-v}\left(\frac{v}{s}\right)^{k-1}dv\\
&=& \int_0^\infty e^{-v}v^{k-1}dv\\
&=& (k-1)!
\end{eqnarray*} from which it follows that $C = (k-1)!$.  Note that the first line follows from the fact that $\sigma(t) = t^{k-1}$ for $t\leq 1$, and that $e^{-v}$ decreases rapidly.  

By Lemma 7.1 of \S 5.4 of \cite{Ten}, we have that for $s \in \mathbb{C}\;\backslash (-\infty, 0]$ that 
$$-J(s) = I(-s)+\gamma +\log s,
$$and this concludes the proof.
\end{proof}

In order to apply the saddlepoint method, we first collect some information on the extrema of the integrand in (\ref{laplaceinverseeqn}).  In the sequel, we let $W(x)$ denote the Lambert W function which is defined by $x= W(x)e^{W(x)}$. We remind the reader that there exists two real branches of $W(x)$ when $x\geq -1/e$ which we denote by $W_0$ and $W_{-1}$, where they are distiguished by $W_0(0) = 0$ and $W_{-1}(0) = -\infty$.  

\begin{lemma5}\label{extremalem}
Let $\Phi(s) = \hat{\sigma}(s) e^{us}$ and let $\xi(u) = -W\left(\frac{-(k+1) e^{-k/u}}{u}\right)-k/u$, where $W$ is a branch of the Lambert W function.  Then $\Phi '(\xi) = 0$.  If $|u-k| \geq 2\sqrt{k}$, then we may pick $\xi(u)$ to be real.  In particular, we pick 
\begin{equation}
\xi(u) = 
\begin{cases}
-W_0\left(\frac{-(k+1) e^{-k/u}}{u}\right)-k/u & \textup{for $u\leq k-2\sqrt{k}$}\\
-W_{-1}\left(\frac{-(k+1) e^{-k/u}}{u}\right)-k/u & \textup{for $u> k+2\sqrt{k}$}
\end{cases}
\end{equation}
For this choice of $\xi(u)$, we have that if $|u-k| \gg k^{1/2+\epsilon}$, then $\xi(u) \gg k^{-1/2+\epsilon}$.  Moreover, $\xi(u)<0$ for $u<k-2\sqrt{k}$, and $\xi(u) > 0$ for $u>k+2\sqrt{k}$.

\end{lemma5}
\begin{proof}
We have that 
\begin{eqnarray*}
\frac{d}{ds} \left(se^{(k+1)I(-s)}e^{us}\right)=e^{(k+1)I(-s)}e^{us}\left(1+s(u-(k+1)I'(-s))\right),
\end{eqnarray*}and this is $0$ when $s = -\xi(u)$ where $\xi(u)$ satisfies
$$(k+1)e^{\xi(u)} = k+u\xi(u).
$$In other words
\begin{equation}\label{xieqn}
\xi(u) = -W\left(\frac{-(k+1) e^{-k/u}}{u}\right)-k/u,
\end{equation}where $W$ is the Lambert W function.  Note that $\frac{-(k+1) e^{-k/u}}{u} \geq -1/e \Leftrightarrow (k+1)\leq ue^{\frac{k-u}{u}}$.  We first verify that the latter holds for all $|u-k|\geq 2\sqrt{k}$.  Indeed, a little calculus tells us that the function $ue^{\frac{k-u}{u}}$ has a global minimum on $[0, \infty)$ at $u = k$.  Since it is decreasing on $[0, k)$ and increasing on $[k, \infty)$, it suffices to check the assertion for $|u-k|= 2\sqrt{k}$.  But for $|u-k|= 2\sqrt{k}$, we have
\begin{eqnarray*}
ue^{\frac{k-u}{u}}
&=& k+\frac{(k-u)^2}{2u}+\frac{(k-u)^3}{3!u^2}+...\\
&\geq& k+\left(\frac{1}{2} - \frac{1}{3!}\right)\frac{(k-u)^2}{u}\\
&\geq& k+\frac{4}{3}\\
&>& k+1.
\end{eqnarray*}

Now let $u = k+E$, where $|E|>2\sqrt{k}$.  We examine two cases.  First, when $E<0$, we know that $-W_0(x) \leq 1$ for all $x\leq 0$ so
$$\xi(u) \leq 1 - \frac{k}{k+E} = \frac{E}{k+E}<0.$$
Next, when $E>0$, we know that $-W_{-1}(x) \geq 1$ for all $x\leq 0$ so
$$\xi(u) \geq 1 - \frac{k}{k+E} = \frac{E}{k+E}>0
$$Note that  $\left|\frac{E}{k+E}\right|\gg \frac{1}{k^{1/2-\epsilon}}$,
if $|E|\gg k^{1/2+\epsilon}$, and that $\xi(u)$ shares the same sign with $E$. 

\end{proof}

\begin{rmk1}
To motivate the definition of $\xi(u)$ in this lemma, note that $k/u$ is close to satisfying the equation defining $W\left(\frac{-(k+1)e^{-k/u}}{u}\right)$ so $k/u$ must sometimes be close to one of the branches.  The idea here is to take the other branch.  The sign change for $\xi(u)$ occurs near $u=k$, and this is also when the branches converge to the same value at $-\frac{1}{e}$.

\end{rmk1} 
We now need to estimate $\sigma(u)$ by Laplace inversion of $\hat{\sigma}(s)$ on the $\Re s = \Re \xi$ line.  For this purpose, we collect the following estimates.  

\begin{lemma5}\label{boundsforlatranslem}
Let $\xi$ be as in Lemma \ref{extremalem}.  Write $s = -\xi + i\tau$, with $\tau$ real, and assume $1<u\leq 10k$ with $|k-u|\gg k^{1/2+\epsilon}$.  Then for $|\tau|\geq k+u|\xi|$,
\begin{equation}\label{lem5eqn1}
\hat{\sigma}(s) = \frac{(k-1)!}{s^{k-1}}\left(1+O\left(\frac{u\xi + k}{|s|}\right)\right).
\end{equation}
Moreover, there exists $c >0$ such that for $|\tau| \leq \pi$, 
\begin{equation}\label{lem5eqn2}
\hat{\sigma}(s) \ll (k-1)! s e^{(k+1)(\gamma + I(\xi) )} e^{-c\frac{(k+1)}{|\xi|+1}\tau^2},
\end{equation}
and for $|\tau| > \pi$,
\begin{equation}\label{lem5eqn3}
\hat{\sigma}(s) \ll (k-1)! s e^{(k+1)(\gamma + I(\xi))}e^{-c\frac{(k+1)}{|\xi|+1}}.  
\end{equation}
\end{lemma5}
\begin{proof}
The first bound follows from $\hat{\sigma}(s) = \frac{(k-1)!}{s^{k-1}}e^{-(k+1)J(s)}$, and the bound $J(s) \ll |\frac{e^{\xi}}{s}| = |\frac{u\xi + k}{(k+1)s}|$.  
For the other two cases, set $H(\tau) = I(\xi) - I(-s) = \int_0^1 \frac{e^{h\xi}}{h}(1-e^{-i\tau h})dh$.  We extract real part to get that
\begin{eqnarray*}
\Re H(\tau)
&=& \int_0^{1} \frac{e^{h\xi}}{h}(1-\cos\tau h) dh 
\end{eqnarray*} 

For (\ref{lem5eqn2}), note that $1-\cos h\tau \geq \frac{2\tau^2h^2}{\pi^2}$ for $|\tau| \leq \pi$. We have that by the calculation in \cite{Ten} in Lemma 8.2
\begin{eqnarray*}
\Re H(\tau)
&\geq&\frac{\tau^2}{2\pi^2} \left|\int_0^{1} e^{h\xi} dh\right| \\
&\gg& \frac{\tau^2}{|\xi|+1}.
\end{eqnarray*}  From this and Lemma \ref{Latranslem}, we have (\ref{lem5eqn2}).

To prove the third bound (\ref{lem5eqn3}), observe that
\begin{eqnarray*}
\Re H(\tau)
&=& \int_0^{1} \frac{e^{h\xi}}{h}(1-\cos\tau h) dh \\
&\gg& \frac{1}{|\xi|+1}.
\end{eqnarray*}The last line follows from considering an open set $E \subset [0, 1]$ of small measure outside of which $(1-\cos\tau h) \gg 1$.  One may make $E$ small enough so that $\int_E e^{h\xi}dh$ is bounded by $\int_{[0, 1]\backslash E} e^{h\xi}dh$.  This is possible since $\xi \leq C$ for some absolute constant $C$ for $u$ in the specified range.  This is true in the case $u < k$ because $-W_0(x) \leq 1$ for $x\leq 0$ and it is true for $u > k$ since the argument inside $W_{-1}$ is bounded away from $0$ when $u\leq 10k$.
\end{proof}
We now apply the bounds above to obtain an estimate for $\sigma(t)$.  Set $\delta = \sqrt{\frac{\log^3 (k+1)}{c(k+1)}}$ where $c$ is the constant appearing in Lemma \ref{boundsforlatranslem}.  Let $K(u) = \frac{1}{2\pi}\int_{-\delta}^{\delta} \hat{\sigma}(s) e^{us} d\tau$, and $H(u) = \frac{1}{2\pi} \int_{\mathbb{R}\backslash[-\delta, \delta]} \hat{\sigma}(s) e^{us} d\tau$.  As above, we have written $s = -\xi + i\tau$.  We know that $\sigma(u) = K(u) + H(u)$, and we first find an upper bound for $H(u)$.

\begin{lemma5}\label{lemevalH}
Assume $k\geq 3$ and $u\gg \frac{k}{\log k}$ with $|k-u|\gg k^{1/2+\epsilon}$.  Then 
$$H(u) \ll (k-1)!e^{(k-1)(\gamma + I(\xi))} \frac{1}{(k+1)^{\log^2k}}.$$
\end{lemma5}
\begin{proof}
First note by (\ref{xieqn}) that $\xi\ll \log k$ when $u\gg \frac{k}{\log k}$.  Now, we split the integral in the definition of $H(u)$ into 3 ranges.  First, when $\delta < |\tau|\leq \pi$, we have by (\ref{lem5eqn2}) that the integral is 
\begin{eqnarray*}
&\ll& (k-1)!e^{(k-1)(\gamma + I(\xi))} \int_\delta^\infty e^{-c(k+1)\tau^2/\log k}d\tau\\
&\ll&(k-1)!e^{(k-1)(\gamma + I(\xi))} \frac{1}{\sqrt{k+1}^{1-\epsilon}} \int_{\log^{3/2} (k+1)}^\infty e^{-\tau^2}d\tau\\
&\ll&(k-1)!e^{(k-1)(\gamma + I(\xi))} \frac{1}{(k+1)^{\log^2k}}.
\end{eqnarray*}
Next, when $\pi < |\tau| \leq k+u|\xi|$, we get by (\ref{lem5eqn3}) that the integral is
$$\ll (k-1)! e^{(k-1)(\gamma + I(\xi))}e^{-k^{1-\epsilon}},
$$where we have used that $u\ll k$.  Lastly, for $|\tau| \geq k+u|\xi|$, we get by (\ref{lem5eqn1}) that the integral is
$$\ll (k-1)! \frac{1}{k^{k-1}},
$$which is tiny.
\end{proof}
Now we are ready to evaluate $K(u)$.
\begin{lemma5} \label{lemevalK}
Suppose that $k\geq 3$ and $\frac{k}{\log k} \ll u \leq 10k$ with $|k-u|\gg k^{1/2+\epsilon}$.  Then
$$K(u) = \frac{-(k-1)! \xi e^{(k+1)(\gamma+I(\xi))-u\xi}}{\sqrt{2\pi(k+1)I''(\xi)}}\left(1+ O\left(\frac{1}{(k+1)^{\epsilon}}\right)\right)
$$
\end{lemma5}
\begin{proof}
We first examine the Taylor expansion of $I(-s)$ about $\xi$.  First note that
$$I'(\xi) = \frac{e^{\xi}-1}{\xi} = \frac{u}{k+1} - \frac{1}{(k+1)\xi},
$$as before.  Thus
\begin{eqnarray*}
I(-s) &=& I(\xi) - \frac{i\tau u}{k+1}+\frac{i\tau}{(k+1)\xi} - \frac{\tau^2I''(\xi)}{2} + O(\tau^3).
\end{eqnarray*}

Since $\frac{1}{k^{1/2-\epsilon}} \ll \xi \ll \log k$ for $\frac{k}{\log k} \ll u\leq 10k$, we have that for $|\tau|\leq \delta$,
$$e^{(k+1)I(-s) + us} = e^{(k+1)I(\xi) - u\xi - (k+1)\frac{\tau^2I''(\xi)}{2} }\left(1+O\left(\frac{1}{k^{\epsilon}}\right)\right),
$$and so
\begin{eqnarray*}
K(u) &=& (k-1)! e^{(k+1)(\gamma+I(\xi))-u\xi}
\int_{-\delta}^\delta e^{ - (k+1)\frac{\tau^2I''(\xi)}{2}}(-\xi + i\tau)d\tau \times \\
&&\left(1+O\left(\frac{1}{k^{\epsilon}}\right)\right)\\
&=&-(k-1)! \xi e^{(k+1)(\gamma+I(\xi))-u\xi}
\int_{-\delta}^\delta e^{ - (k+1)\frac{\tau^2I''(\xi)}{2}}d\tau\left(1+O\left(\frac{1}{k^{\epsilon}}\right)\right),
\end{eqnarray*} by symmetry.  Note that
$$I''(\xi) = \frac{\xi e^{\xi} - e^{\xi} + 1}{\xi^2}.
$$Then for $u$ in the range specified, $\frac{1}{\log^2k} \ll I''(\xi)\ll 1$.
We also have that
\begin{eqnarray*}
\int_{-\delta}^\delta e^{ - (k+1)\frac{\tau^2I''(\xi)}{2}}d\tau
&=&\int_{-\infty}^\infty e^{ - (k+1)\frac{\tau^2I''(\xi)}{2}}d\tau + O\left(\frac{1}{\sqrt{I''(\xi)}(k+1)^{3/2}}\right)\\
&=& \sqrt{\frac{2\pi}{(k+1)I''(\xi)}}\left(1+ O\left(\frac{1}{(k+1)^{1/2}}\right)\right),
\end{eqnarray*}as desired.
\end{proof}

\begin{prop}
Say that $k\geq 3$ and $\frac{k}{\log k} \ll u \leq 10k$ with $|u-k| \gg k^{1/2+\epsilon}$.  Then
$$\sigma(u) = \frac{-(k-1)! \xi e^{(k+1)(\gamma+I(\xi))-u\xi}}{\sqrt{2\pi(k+1)I''(\xi)}}\left(1+ O\left(\frac{1}{(k+1)^{\epsilon}}\right)\right)
$$Moreover, by Lemma \ref{extremalem}, the first zero of $\sigma(u)$ must be $k+O(k^{1/2+\epsilon})$.
\end{prop}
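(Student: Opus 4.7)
The plan is to combine the three lemmas immediately preceding the proposition. Recall that by the Laplace inversion (\ref{laplaceinverseeqn}) evaluated along the shifted contour $\Re s = -\xi(u)$, we have the decomposition $\sigma(u) = K(u) + H(u)$ with $K$ and $H$ as defined just before Lemma \ref{lemevalH}. Lemma \ref{lemevalK} evaluates $K(u)$ as exactly the right-hand side of the proposition up to the factor $1 + O((k+1)^{-\epsilon})$. So the first assertion follows as soon as one verifies that $H(u)$ is at most $|K(u)|$ times $O((k+1)^{-\epsilon})$, which can then be absorbed into the error term already present.

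To establish $H(u)/|K(u)| = O((k+1)^{-\epsilon})$, I would compare the bound $H(u) \ll (k-1)!\, e^{(k-1)(\gamma + I(\xi))}\, (k+1)^{-\log^2 k}$ from Lemma \ref{lemevalH} against $|K(u)| \asymp (k-1)!\, |\xi|\, e^{(k+1)(\gamma + I(\xi)) - u\xi}/\sqrt{k+1}$. The ratio is proportional to $\sqrt{k+1}\,|\xi|^{-1}\, e^{u\xi - 2(\gamma + I(\xi))} (k+1)^{-\log^2 k}$. The main technical hurdle is showing this ratio is small uniformly across the whole range; one uses the defining relation $(k+1)e^\xi = k + u\xi$ to control $u\xi$ and $I(\xi)$ against the quasi-super-exponential factor $(k+1)^{-\log^2 k} = e^{-\log^3(k+1)}$, together with the lower bound $|\xi| \gg k^{-1/2+\epsilon}$ from Lemma \ref{extremalem} to tame the $|\xi|^{-1}$. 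This is the step I expect to be the most delicate.

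For the statement about the first zero of $\sigma$, I would argue by continuity together with three sign inputs. First, Lemma \ref{lembasizero} shows that any zero of $\sigma$ must satisfy $u_0 \gg k/\log k$, so no zero lies in $(0, c k/\log k)$ for the implied constant $c$. Second, the asymptotic formula just established shows that on the range $k/\log k \ll u \leq 10k$ with $|u-k| \gg k^{1/2+\epsilon}$, the sign of $\sigma(u)$ matches that of the real quantity $-\xi(u)$, since every other factor in the main term is positive. Third, Lemma \ref{extremalem} gives $\xi(u) < 0$ for $u < k - 2\sqrt{k}$ and $\xi(u) > 0$ for $u > k + 2\sqrt{k}$. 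Combining these, $\sigma$ is strictly positive on $(c k/\log k, k - C k^{1/2+\epsilon})$ and strictly negative on $(k + C k^{1/2+\epsilon}, 10k)$ for suitable constants $c, C$; since $\sigma$ is continuous, it must have a zero in the transition window $|u - k| \ll k^{1/2 + \epsilon}$, and by the positivity to the left, this must be the \emph{first} zero. Hence $u_0 = k + O(k^{1/2+\epsilon})$ as claimed.
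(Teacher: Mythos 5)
Your overall architecture — decomposing $\sigma(u) = K(u) + H(u)$ via Laplace inversion on $\Re s = -\xi$, reading off the main term from Lemma~\ref{lemevalK}, dominating $H$ by Lemma~\ref{lemevalH}, and then tracking the sign of $\sigma$ through the sign of $-\xi$ together with Lemma~\ref{lembasizero} — is exactly the paper's route, and your sign argument for the location of the first zero is correct and matches the paper's (terse) reasoning.

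However, the step you yourself flag as delicate contains a genuine gap, and the resolution you sketch cannot work. You write the ratio as proportional to $\sqrt{k+1}\,|\xi|^{-1} e^{u\xi - 2(\gamma + I(\xi))}(k+1)^{-\log^2 k}$ and propose to control the $e^{u\xi}$ factor using $(k+1)e^{\xi} = k + u\xi$ against $(k+1)^{-\log^2 k} = e^{-\log^3(k+1)}$. But when $u > k$ and $|u-k| \gg k^{1/2+\epsilon}$, the branch $W_{-1}$ forces $\xi > 0$, and one checks easily (e.g. $u = k + k^{3/4}$ gives $\xi \asymp k^{-1/4}$, hence $u\xi \asymp k^{3/4}$; and $u = 10k$ gives $\xi \asymp 1$, hence $u\xi \asymp k$) that $u\xi$ grows polynomially in $k$, while $\log^3(k+1)$ does not, and $I(\xi) = O(1)$ for bounded $\xi$ offers no rescue. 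So the factor $e^{u\xi}e^{-\log^3(k+1)}$ actually diverges, and the ratio you wrote is \emph{not} $O((k+1)^{-\epsilon})$.

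The source of the trouble is that the bound in Lemma~\ref{lemevalH} as printed suppresses the factor $|e^{us}| = e^{-u\xi}$ that arises on the contour $\Re s = -\xi$; tracing through the proof of that lemma (each of the three ranges bounds $|\hat\sigma(s)|$ and then must be multiplied by $|e^{us}| = e^{-u\xi}$), the correct statement reads
$$H(u) \ll (k-1)!\,e^{(k+1)(\gamma + I(\xi)) - u\xi}\,\frac{1}{(k+1)^{\log^2 k}}.$$
With this, the $e^{u\xi}$ in your ratio cancels identically against the corresponding factor in $K(u)$, the ratio $H(u)/|K(u)|$ becomes $\ll \sqrt{(k+1) I''(\xi)}\,|\xi|^{-1}\,(k+1)^{-\log^2 k}$, and using $|\xi| \gg k^{-1/2+\epsilon}$ and $I''(\xi) \ll 1$ from the preceding lemmas this is overwhelmingly $O((k+1)^{-\epsilon})$, with no reliance on the defining relation for $\xi$. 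You should restore the $e^{-u\xi}$ factor and replace the "delicate" paragraph with this direct cancellation; as written, your argument does not close.
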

\begin{proof}
The expression for $\sigma(u) = K(u)+H(u)$ follows directly from \ref{lemevalH} and \ref{lemevalK}.  Note that $I''(\xi) \gg \frac{1}{\log^2k}$ for $u$ in the range specified.  The last assertion follows from noting that $\sigma(u)$ changes sign when $\xi$ changes sign, and the fact that by Lemma \ref{lembasizero}, the first zero of $\sigma(u)$ must be $\gg \frac{k}{\log k}$.
\end{proof}
Finally, we note that Theorem $\ref{thmnegativresult}$ follows immediately from the above proposition.

\section{Cubic and Biquadratic Fields} \label{seccubicbi}
We now investigate the question of bounding the least non-split prime when $K$ is either cubic or biquadratic.  The general philosophy is the same for the two cases, although the technical details are different.  There is always a "trivial" bound which arises from considering cancellation in a quadratic character, and our purpose is to show that this bound can be improved.  In both cases, we benefit from interaction between a primary multiplicative function of interest and quadratic characters.  Simply put, if all the primes split up to the trivial bound, then the quadratic character is extremal and we may predict its behaviour far beyond the cancellation point.  In this case, the interaction with the primary multiplicative function produces a contradiction.  In order to obtain an actual bound, we need to understand what it means for a quadratic character to be close to extremal.

\subsection{Extremal behavior} \label{sectionextreme}
Let $\chi$ denote a quadratic character with modulus $q$ such that $\chi(p) = 1$ for all $p \leq y$ whenever $p\nmid q$.   We set $P(u) = \frac{1}{ \nu (y^u)} \sum_{p\leq y^u}\chi(p)\log p$, where $\nu(x) = \sum_{p\leq x}\log p$.  Also, let $\sigma(u) = \frac{1}{y^u}\sum_{n\leq y^u} \chi(n)$.  We further define 

$$I_{j}(u) = \int_{\substack{t_1+...+t_j \leq u\\t_i\geq 1\forall 1\leq i\leq j}}\prod_{i=1}^j\frac{1-P(t_i)}{t_i}dt_1...dt_j.$$
We remind the reader that $\sigma(u) = \sum_{j\geq 0}\frac{(-1)^jI_j(u)}{j!}$ where $I_0 \equiv 1$.  Note that the sum on the right hand side is finite.  Moreover, we have that $\sum_{j=0}^{2m-1}\frac{(-1)^jI_j(u)}{j!}\leq \sigma(u)\leq \sum_{j=0}^{2m}\frac{(-1)^jI_j(u)}{j!}$ for any $m\geq 0$.  Once again, we refer the reader to \cite{GS} for more details.

Let $A>0$ be such that $y^A = q^{1/4}$, so that $\sigma(u) = o(1)$ for $u>A$.  The reader should think of $A$ as being somewhat larger than $\sqrt{e}$.  The simple case when $A = \sqrt{e}$ is the extremal case appearing in the bound (\ref{burgesseqn}) and the behaviour of $P(t)$ here has been studied by other authors.  In their study of Beurling primes, Diamond, Montgomery, and Vorhauer reproduce the unpublished analysis of Heath-Brown on this subject in the appendix of \cite{DMV}.  This was also examined by Granville and Soundararajan \cite{GSBurgess} in an unpublished manuscript.  The lemma below quantifies the behaviour of $P(t)$ by comparing $\chi$ to an extremal character.  

\begin{lemma5} \label{lemquadchar}
Suppose that $\sqrt{e} \leq A\leq 2$, and set\footnote{$E$ measures the deviation of $A$ from $\sqrt{e}$.  In particular, $E = 0$ when $A = \sqrt{e}$.} $E = 2\log A - 1$.  Then the following holds.

1.  Say that we have some interval $(a, b) \subset (1, A)$.  Then 
\begin{equation*}
\int_a^b\frac{1-P(t)}{t}dt \geq 2\log \frac{b}{a} -E+o(1).
\end{equation*}
2.  For all $t \in [2, 3]$ but for a set of measure $0$, we have that
$$\frac{1 - P(t)}{t} = \frac{1}{2} \int_1^{t-1} \frac{1-P(u)}{u}\frac{1-P(t-u)}{t-u}du.
$$
Moreover, for all $t \in [2, 4]$ but for a set of measure $0$, we have that
$$\frac{1 - P(t)}{t} \leq \frac{1}{2} \int_1^{t-1} \frac{1-P(u)}{u}\frac{1-P(t-u)}{t-u}du.
$$
3. For all $t \in [2, 1+A]$ but for a set of measure $0$, we have that
$$\frac{4}{t}\log (t-1) - 2E \leq \frac{1 - P(t)}{t} \leq \frac{4}{t}\log (t-1)
$$
4.  For all $t \in [1+A, 3]$ but for a set of measure $0$, we have that
$$\frac{1 - P(t)}{t} \geq \frac{4}{t}\log \frac{A}{t-A} - 2E +o(1),$$ and for $t \in [3, 4]$, we have that
$$\frac{1-P(t)}{t} \geq \frac{4}{t}\log \frac{A}{t-A} - 2E -\frac{2}{3}(t-3)^2+o(1).
$$
Moreover, for  all $t \in [1+A, 2A]$ but for a set of measure $0$, we have that 
$$\frac{1 - P(t)}{t} \leq \frac{4}{t}\log \frac{A}{t-A}+o(1).
$$
\end{lemma5}
\begin{proof}
Note that $\sigma(u) = o(1)$ for $u>A$.  Thus $\int_1^A\frac{1-P(t)}{t}dt = 1+o(1)$ and the first assertion follows since $1-P(t) \leq 2$.

The second assertion follows from the fact that $P(t)$ is continuous almost everywhere, and when $P(t)$ is continuous, 
$$\frac{1-P(t)}{t}  = \lim_{\epsilon \rightarrow 0} \frac{1}{\epsilon} \left(I_1(t+\epsilon) - I_1(t)\right).$$  For $t \in [2, 3]$, 
$$I_1(t+\epsilon) - I_1(t) = \frac{1}{2}(I_2(t+\epsilon) - I_2(t)),$$ and for $t \in [2, 4]$,  
\begin{eqnarray*}
I_1(t+\epsilon) - I_1(t) 
&=& \frac{1}{2}\left(I_2(t+\epsilon) - I_2(t)\right) - \frac{1}{6}\left(I_3(t+\epsilon) - I_3(t)\right) \\
&\leq& \frac{1}{2}\left(I_2(t+\epsilon) - I_2(t)\right).
\end{eqnarray*}
Thus, it remains to evaluate 
\begin{eqnarray*}
\lim_{\epsilon \rightarrow 0}\frac{1}{2\epsilon}\left(I_2(t+\epsilon) - I_2(t)\right)
&=&  \lim_{\epsilon \rightarrow 0}\frac{1}{2\epsilon}\int_{\substack{t \leq t_1+t_2 \leq t+\epsilon\\ t_1, t_2\geq 1}}\frac{1-P(t_1)}{t_1}\frac{1-P(t_2)}{t_2}dt_1dt_2\\
&=& \frac{1}{2}\int_1^{t-1}\frac{1-P(t_1)}{t_1}\frac{1-P(t-t_1) }{t - t_1}dt_1,
\end{eqnarray*}almost everywhere, as desired.

To prove the upper bound in the third assertion, note that 
$$\frac{1}{2} \int_1^{t-1} \frac{1-P(u)}{u}\frac{1-P(t-u)}{t-u}du \leq 
2 \int_1^{t-1} \frac{1}{u}\frac{1}{t-u}du
= \frac{4}{t}\log (t-1).
$$To prove the lower bound in the third assertion, we let $f(t) = \frac{1-P(t)}{t}$ and $m(t) = \frac{2}{t} \geq f(t)$ for all $t$.  Then we have that for $t \in [2, 1+A]$,
\begin{eqnarray*}
\int_1^{t-1} f(u)f(t-u)du
&=& \int_1^{t-1} (f(u)-m(u))f(t-u)du \\
&+& \int_1^{t-1} m(u)(f(t-u)-m(t-u))du 
+ \int_1^{t-1} m(u)m(t-u)du\\
&\geq& \frac{8}{t} \log (t-1) -4E.
\end{eqnarray*}Here we have bounded the first two terms from below both by $-2E$ using the first assertion and that $f(u)\leq m(u) \leq 2$ for all $u\in [1, A]$.

The proof of the fourth assertion is similar.  The only difference in the proof of the first and last bounds arises from the fact that $\int_A^2\frac{1-P(u)}{u}du = o(1)$.  Thus for $1+A \leq t = 1+A+\delta \leq 2A$,
$$\int_1^{t-1} \frac{1-P(u)}{u}\frac{1-P(t-u)}{t-u}du = \int_{1+\delta}^{t-1-\delta} \frac{1-P(u)}{u}\frac{1-P(t-u)}{t-u}du + o(1).
$$For the second bound in the fourth assertion, one also needs to use that
\begin{eqnarray*}
\frac{1-P(t)}{t}
&=& \lim_{\epsilon \rightarrow 0} \frac{1}{\epsilon}\left(\frac{1}{2}\left(I_2(t+\epsilon) - I_2(t)\right) - \frac{1}{6}\left(I_3(t+\epsilon) - I_3(t)\right)\right) \\
&\geq& \frac{1}{2}\lim_{\epsilon \rightarrow 0} \frac{1}{\epsilon}\left(I_2(t+\epsilon) - I_2(t)\right) - \frac{2}{3}(t-3)^3.
\end{eqnarray*}This follows from the calculation that
\begin{eqnarray*}
\lim_{\epsilon \rightarrow 0} \frac{1}{\epsilon}\left(I_3(t+\epsilon) - I_3(t)\right) 
&=& \int_{\substack{t_1+t_2\leq t-1\\t_1, t_2\geq 1}}\frac{1-P(t_1)}{t_1}\frac{1-P(t_2)}{t_2}\frac{1-P(t-t_1-t_2)}{t-t_1-t_2}dt_1dt_2\\
&\leq& 8\int_{\substack{t_1+t_2\leq t-1\\t_1, t_2\geq 1}}\frac{1}{t_1}\frac{1}{t_2}\frac{1}{t-t_1-t_2}dt_1dt_2\\
&\leq& 8\frac{(t-3)^2}{2},
\end{eqnarray*}upon calculating the volume of the region of integration.
\end{proof}

\subsection{Cubic Fields}
Let $K$ be a cubic field.  In this case, it is easy to see that a much better result than $\N\ll d_K^{\frac{1}{2(l-1)}}$ is possible.  In the case where $K$ is Galois, then $K$ must necessarily be abelian, and so $\zeta_K(s) = \zeta(s)L(s, \chi_1)L(s, \chi_2)$ for some Dirichlet characters $\chi_1$ and $\chi_2$ with conductors $q_1$ and $q_2$ respectively.  Say that $q_1 \leq q_2$.  Then since $\chi_1$ has order $3$, by Lemma 2.4 of \cite{DRHB}, $\chi_1(n)$ exhibits cancellation by $q^{1/4+\epsilon}$.  Thus $\N \ll_\epsilon q_1^{1/4+\epsilon} \ll d_K^{1/8+\epsilon}$.  Clearly, a stronger statement should be possible in the abelian case, but we shall be more interested in the general case here.

For the rest of this section, say that $K$ is not Galois.  Then
$$\zeta_K(s) = \zeta(s) L(f, s),
$$where $f$ is a holomorphic modular Hecke eigenform of weight $k$ and level $N$.  We also have that the $L$-function associated to $f$ is of the form
$$L(f, s) = \prod_p \left(1-\frac{\alpha_p}{p^s}\right)^{-1}\left(1-\frac{\beta_p}{p^s}\right)^{-1}
= \prod_p \left(1-\frac{a(p)}{p^s} + \frac{\chi(p)}{p^{2s}}\right)^{-1},
$$where $\chi$ is a quadratic character with modulus $q \leq d_K$.  Visibly from the Euler product above, we have that $p$ cannot split in $K$ if $\chi(p) = -1$.  Thus, 
\begin{equation}\label{eqncubicbound}
\N \ll d_K^{1/4\sqrt{e}+o(1)}.
\end{equation}  This is the starting point for our investigation.

Let $f(n)$ be the completely multiplicative function with $f(p) = a(p)$ for all primes $p$.  Then $f(n)$ exhibits cancellation by $d_K^{1/2+o(1)}$.  We now try to improve the bound of $\N \ll d_K^{1/4\sqrt{e}}$ by leveraging information about the two multiplicative functions $f(n)$ and $\chi(n)$.

\begin{rmk2}
Our main focus here is to show that improvments over the bound (\ref{eqncubicbound}) are possible.  For simplicity, we will not attempt to completely optimize our calculations.  In particular, we do not use the available subconvexity result for $\zeta_K(s)$ which show that $f(n)$ exhibits cancellation by $d_K^{1/2-\delta}$ for some $\delta >0$ (see Appendix A of \cite{EEMV} for a synopsis of known results).  
\end{rmk2}

As in \S \ref{sectionextreme}, let $P(t)$ denote the average over primes of $f(p)$ and let $P'(t)$ denote the same average for $\chi(p)$.  Let \footnote{This definition of $\sigma(t)$ differs from the definition in \S 3 by a factor of $t$.}$\sigma(t) = \frac{1}{y^t\log y^t}\sum_{n\leq y^t}f(n)$.  Also, as in \S \ref{sectionextreme}, assume that there exists some $y = d_K^A$ such that all primes $p\leq y$ split completely, where we may assume that $A>\frac{1}{8}$.

We begin by quantifying the relationship between $f(p)$ and $\chi(p)$.

\begin{lemma5} \label{lemcubictech}
With $f$ and $\chi$ as above, we have that $f(p) \geq -\frac{\chi(p)+1}{2}$ for all unramified primes $p$.  It follows that $P(t) \geq -\frac{P'(t)+1}{2} + o(1),$where the $o(1)$ is a quantity tending to $0$ as $d_K \rightarrow \infty$ uniformly for $t\geq 1$.
\end{lemma5}
\begin{proof}
This follows from the fact that $f(p) = \alpha_p + \beta_p$ and $\chi(p) = \alpha_p \beta_p$.  First assume that $p$ is unramified.  There are three possibilities to check corresponding to the three possibilities for the local factor at $p$ in $\zeta_K(s)$ which is always of the form $\prod_{\mathfrak{p}|p}\left(1-\frac{1}{N(\mathfrak{p})^s}\right)^{-1}$.  When $p$ splits completely, the local factor is
$$\left(1-\frac{1}{p^s}\right)^{-3},
$$
and so $\alpha_p = \beta_p = 1$ whence $f(p) = 2$ and $\chi(p) = 1$.  When $p$ is inert, 
the local factor is of the form
$$\left(1-\frac{1}{p^{3s}}\right)^{-1},
$$so $\alpha_p = 1/\beta_p = e^{\pm 2\pi i/3}$ and $f(p) = -1$ and $\chi(p) = 1$.  In the remaining case, $p$ factors as $p = \mathfrak{p}_1\mathfrak{p}_2$ where the norms of the ideals on the right are $p$ and $p^2$, and so the local factor is of the form
$$\left(1-\frac{1}{p^s}\right)\left(1-\frac{1}{p^{2s}}\right).
$$
Thus, in this case, $\alpha_p = -\beta_p = \pm 1$ and $f(p) = 0$ and $\chi(p) = -1$.  In all three cases, we have verified that $f(p)  \geq -\frac{\chi(p)+1}{2}$.  The statement about the averages $P(t)$ and $P'(t)$ follows by definition, and since the number of ramified primes is bounded by $\log d_K$, and hence contribute at most $O\left(\frac{\log^2 d_K}{y}\right) = O\left(\frac{\log^2 d_K}{d_K}\right) = o(1)$.
\end{proof}

\subsubsection{Outline of proof:}  Our bound for $\N$ will result from a lower bound for the first zero of $\sigma(t)$, which we know must eventually be identically zero by cancellation.  The Lemma above combined with the Proposition \ref{propstructure} tells us that we can instead study the first zero of the solution to (\ref{eqnconvolution}) with $-\frac{P'(t)+1}{2}$ in place of $P(t)$.  We then use our estimates for $P'(t)$ from Lemma \ref{extremalem} to finish the proof.

$\\$
We let $$I_j(u) = \int_{\substack{t_1+...+t_j \leq u\\t_k\geq 1\forall 1\leq k\leq j}}\left(\frac{u-\sum_{k=1}^jt_k}{u}\right)\prod_{k=1}^j\frac{(2-P(t_k))}{t_k}dt_1...dt_j.$$  Then for $u\leq 4$,
$$\sigma(u) = 1-I_1(u) + \frac{I_2(u)}{2} - \frac{I_3(u)}{6}.
$$
Set 
$$I_3'(u) = \int_{\substack{t_1+t_2+t_3 \leq u\\t_k\geq 1\forall 1\leq k\leq 3}}\frac{u-t_1-t_2-t_3}{ut_1t_2t_3}dt_1dt_2dt_3.$$ 
Note that
$$\frac{I_3(u)}{6} \leq \frac{9}{2}I_3'(u),$$
where we have used the trivial bound $2-P(t) \leq 3$.
We thus have that
\begin{equation} \label{eqncubiclowerbound}
\sigma(u) \geq 1-I_1(u) + \frac{{I_2}(u)}{2} - \frac{9}{2}I_3'(u).
\end{equation}
By Proposition \ref{propstructure} and Lemma \ref{lemcubictech}, we know that (\ref{eqncubiclowerbound}) still holds when $P(t)$ is replaced by $-\frac{P'(t)+1}{2}$.  Henceforth, assume that $P(t) = -\frac{P'(t)+1}{2}$ for all $t\geq 1$.  Now, we calculate an upper bound for $I_1(u)$.  

\begin{lemma5}\label{lemcubiclowerI1}
For notational convenience, set $g(t, u) = g(t) = \frac{u-t}{tu}$.  For all $t\in [A, 4]$ but for a set of measure zero, we have $-P(t) \leq U(t)$ where
\begin{equation*}
U(t)= 
\begin{cases} 1 & \text{if $A<t\leq 2$,}
\\
\min(1, 1-2\log (t-1) + Et) &\text{if $2< t\leq 1+A$}
\\
\min(1, 1-2\log \left(\frac{A}{t-A}\right) + Et) &\text{if $1+A< t\leq 3$}
\\
\min(1, 1-2\log \left(\frac{A}{t-A}\right) + Et+\frac{1}{3}t(t-3)^3) &\text{if $3\le t\leq 4$.}
\end{cases}
\end{equation*}
Let $u=2A \leq 4$.  Then, 
\begin{eqnarray*}
\int_1^u (2-P(t))g(t)dt &\leq& 2\int_1^ug(t)dt + \int_A^u U(t)g(t)dt\\
&+& \frac{1}{2}\left(\log A -1+\frac{1}{u}(1+A- \frac{2A}{\sqrt{e}})+ \int_1^Ag(t)dt \right)
\end{eqnarray*}
\end{lemma5}
\begin{proof}
Since we assume that $P(t) = -\frac{P'(t)+1}{2}$ and Lemma \ref{lemquadchar} applies to $P'(t)$, $-P(t) \leq U(t)$ for $A\leq t\leq u$.  Hence,
$$\int_1^u (2-P(t))g(t)dt \leq 2\int_1^ug(t)dt + \int_A^u U(t)g(t)dt+\int_1^A\frac{1+P'(t)}{2}g(t)dt,
$$and moreover,
$$\int_1^A\frac{1+P'(t)}{2}g(t)dt = \frac{1}{2} \left(\int_1^A g(t)dt + \int_1^A\frac{P'(t)}{t} dt - \int_1^A \frac{P'(t)}{u}dt\right).
$$

We know that $\int_1^A \frac{1-P'(t)}{t}dt = 1$, so $\int_1^A \frac{P'(t)}{t}dt = \log A - 1$.  Thus, $\int_1^A P'(t)dt \geq \int_1^{A/\sqrt{e}}1dt-\int_{A/\sqrt{e}}^A 1dt = 2A/\sqrt{e} - 1-A$.  To see this, let 
$$
\gamma(t) = \begin{cases}
1 & \text{if $1\leq t\leq A/\sqrt{e}$,}
\\
-1 & \text{if $A/\sqrt{e}< t\leq A$.}
\end{cases}
$$Note that $\int_1^A \frac{\gamma(t)}{t}dt = \log A-1$.  Let $\lambda(t) : [1, A] \rightarrow [-1, 1]$ be any such function with $\int_1^A \frac{\lambda(t)}{t}dt = \log A-1$ and let $h(t) = \lambda(t) - \gamma(t)$.  It suffices to show that $\int_1^Ah(t) dt\geq 0.$  We have that 
$$A/\sqrt{e} \int_1^{A/\sqrt{e}} \frac{h(t)}{t}dt + A/\sqrt{e} \int_{A/\sqrt{e}}^A \frac{h(t)}{t}dt = 0.$$  Note that $h(t) \leq 0$ for $1\leq t\leq A/\sqrt{e}$ and $h(t) \geq 0$ for $A/\sqrt{e}<t\leq A$.  Thus we have $$A/\sqrt{e} \int_1^{A/\sqrt{e}} \frac{h(t)}{t}dt \leq \int_1^{A/\sqrt{e}}h(t)dt$$
and 
$$A/\sqrt{e} \int_{A/\sqrt{e}}^A \frac{h(t)}{t}dt \leq \int_{A/\sqrt{e}}^Ah(t)dt.$$  Adding the two immediately produces the desired result.

From this, we get that
$$\int_1^A\frac{1+P'(t)}{2}g(t)dt \leq \frac{1}{2}\left(\log A -1+\frac{1}{u}(1+A- 2A/\sqrt{e}) + \int_1^Ag(t)dt \right).
$$
\end{proof}

We now need a lower bound for $I_2(u)$.

\begin{lemma5}
Let
\begin{equation*}
L(t)= 
\begin{cases} 0 & \text{if $1\leq t\leq A$,}
\\
1 & \text{if $A<t\leq 2$,}
\\
\min(1, 1-2\log (t-1)) &\text{if $2< t\leq 1+A$}
\\
\min(1, 1-2\log \left(\frac{A}{t-A}\right)) &\text{if $1+A< t\leq 2A.$}
\end{cases}
\end{equation*}
Then for all $t\in[1, 2A]$ but a set of measure zero we have that $-P(t) \geq L(t)$.  Thus for $u=2A$,
$$I_2(u) \geq \int_{\substack{t_1+t_2 \leq u\\t_k\geq 1}}\frac{(2+L(t_1))}{t_1}\frac{(2+L(t_2))}{t_2}\frac{u-t_1-t_2}{u}dt_1dt_2
$$
\end{lemma5}
\begin{proof}
The proof is immediate from Lemma \ref{lemquadchar}, and the fact that we have set $P(t) = -\frac{1+P'(t)}{2}$.
\end{proof}

We now proceed to prove the Theorem.
\begin{proof}
Preserve the notation from the Lemma above.  Since $\sigma(u) = o(1)$ for $u=2A$, we have that
\begin{eqnarray*}
o(1)&\geq& 1- 2\int_1^ug(t)dt + \int_A^u U(t)g(t)dt+\int_1^A\frac{1+P'(t)}{2}g(t)dt \\
&+& \frac{1}{2}\int_{\substack{t_1+t_2 \leq u\\t_k\geq 1}}\frac{(2+L(t_1))}{t_1}\frac{(2+L(t_2))}{t_2}\frac{u-t_1-t_2}{u}dt_1dt_2 - \frac{9}{2}I_3'(u).
\end{eqnarray*}

Using Maple and the above lemmas, we can check that the right side of the above inequality is positive when $A=1.6625$.  Thus for the inequality above to be true, $A>1.6625$ so $4A > 6.65$, and since $\N \ll_\epsilon d_K^{\frac{1}{4A}+\epsilon}$, we must have that 
$$\N \ll d_K^{\frac{1}{6.65}}.
$$
The number $6.65$ should be compared with $4\sqrt{e} = 6.59...$
\end{proof}

\subsection{Biquadratic Fields}
We now fix $K$ to be a biquadratic field.  Then $\zeta_K(s) = \zeta(s)L(s, \chi_1)L(s, \chi_2)L(s, \chi_1\chi_2)$, where $\chi_1$ and $\chi_2$ are quadratic characters with modulus $q_1$ and $q_2$ say.  Finding the smallest non-split prime is the same as finding the smallest prime which is a quadratic non-residue for either $q_1$ or $q_2$.  Clearly, the trivial bound here is of the form $\N \ll_\epsilon \min(q_1, q_2)^{\frac{1}{4\sqrt{e}}+\epsilon}$ arising immediately from the discussion in the introduction.  Our purpose here is to show that more information can be gleaned from considering the behaviour of $\chi := \chi_1\chi_2$ in conjunction with that of $\chi_1$ and $\chi_2$.  Let $q = \max(q_1, q_2)$; we will only use the fact that both $\chi_i$ exihibt cancellation by $q^{\frac{1}{4}+o(1)}$.  Note that if $q_1$ and $q_2$ are far apart, then we expect to derive little information from the interaction of $\chi_1$ and $\chi_2$.  This will be reflected in the discussion at the end of this section.

Assume that all the primes split up to $y$.  Here, the reader may find it helpful to think of $y$ as being a slightly smaller power of $q_1q_2$ than the trivial bound.  We set $P_i(u) = \frac{1}{ \nu (y^u)} \sum_{p\leq y^u}\chi_i(p)\log p$, for $i=1, 2$ and where $\nu(x) = \sum_{p\leq x}\log p$.  Similarly, we set $P(u)= \frac{1}{ \nu (y^u)} \sum_{p\leq y^u}\chi(p)\log p$.  Finally, define $\sigma_i(u)$ for $i\in\{1, 2\}$, and $\sigma(u)$ as in \S \ref{sectionextreme}.

We also define $$I_{i, j}(u) = \int_{\substack{t_1+...+t_j \leq u\\t_k\geq 1\forall 1\leq k\leq j}}\prod_{k=1}^j\frac{1-P_i(t_k)}{t_k}dt_1...dt_j,$$ and similarly 
$$I_j(u) = \int_{\substack{t_1+...+t_j \leq u\\t_k\geq 1\forall 1\leq k\leq j}}\prod_{k=1}^j\frac{1-P(t_k)}{t_k}dt_1...dt_j.$$
We begin with the following basic observation.
\begin{lemma5} \label{lembiquadPu}
Let 
$$S_1 = \frac{1}{\nu(y^u)}\sum_{\substack{p\leq y^{u} \\ \chi_1(p) = \chi_2(p) = 1}}\log p,$$ and 
$$S_{-1} = \frac{1}{\nu(y^u)}\sum_{\substack{p\leq y^{u} \\ \chi_1(p) = \chi_2(p) = -1}}\log p.$$  Then,
$$P(u)  = 2S_1 + 2S_{-1} -1 + o(1).$$Furthermore, if $P_i(t) \geq \alpha > 0$ for all $i\in \{1, 2\}$, or if $P_i(t) \leq -\alpha <0$ for all $i\in \{1, 2\}$, then
$$P(u) \geq 2\alpha -1.
$$

\end{lemma5}
\begin{proof}
Let 
$$S_{1, -1}(u) = \frac{1}{\nu(y^u)}\sum_{\substack{p\leq y^{u} \\ \chi_1(p) = -\chi_2(p) = 1}}\log p,
$$and similarly define $S_{-1, 1}(u)$.  Then we have that
$$S_1+S_{-1}+S_{1, -1}+S_{-1, 1} = 1+o(1),
$$where the $o(1)$ comes from the ramified primes.
Since $\chi(p) = \chi_1(p)\chi_2(p)$, we also have that
$$P(u) = S_1(u) + S_{-1}(u) - S_{1, -1}(u) - S_{-1, 1}(u).
$$Adding the two equations give the first portion of the Lemma.
Now say that $P_i(t) \geq \alpha > 0$ for all $i\in \{1, 2\}$.  Then since $
\alpha \leq P_1(t) = S_1(t) -S_{-1}(t) + S_{1, -1}(t) - S_{-1, 1}(t)$ and $\alpha \leq P_2(t) = S_1(t) -S_{-1}(t) - S_{1, -1}(t) + S_{-1, 1}(t)$, we have that $2\alpha \leq 2(S_1(t) -S_{-1}(t)) \leq P(t) + 1$, as desired.  The remaining assertion is proven in the exact same way. 
\end{proof}
\subsubsection{Outline of proof:}  As in \S 4.2, our bound for $\N$ will result from a lower bound for the first zero of $\sigma(t)$, which we know must eventually be identically zero by cancellation.  The Lemma above relates the behaviour of $P(t)$ with expressions $P_1(t)$ and $P_2(t)$ which may be estimated by Lemma \ref{extremalem}.

\begin{lemma5}\label{lembiquadtech}
Let $A$ be such that $y^A = q^{1/4}$, and $B\leq 2A$ be such that $y^B = (q_1q_2)^{1/4}$.  Then,
$$0\geq 3 - 4\log A - \int_2^{B}\frac{1-P(t)}{t}dt + o(1).
$$
\end{lemma5}
\begin{proof}
We have that $0 = \sigma_i(u) = 1 - I_{i, 1}(u)$ for $A\leq u\leq 2$.  Adding this for $i=1, 2$, we get
\begin{eqnarray*}
\log u - 1 &=& \int_1^u \frac{P_1(t)+P_2(t)}{2t}dt\\
&=& \int_1^u \frac{S_1(t)-S_{-1}(t)}{t}dt.
\end{eqnarray*}  Rearranging, and noting that $S_1(t) \geq 0$, we get that $\int_1^u \frac{S_{-1}(t)}{t}\geq 1-\log u.$
Hence by the previous Lemma
$$\int_1^u \frac{P(u)}{u}du \geq \int_1^u \frac{2S_{-1}(t)-1}{t}dt \geq 2-3\log u.
$$Thus, rearranging again, and setting $u=  A$, we get that 
$$1 - \int_1^A \frac{1-P(u)}{u}du \geq 3-4\log A + o(1).$$  Observe that $\int_A^2 \frac{1-P_i(u)}{u}du = o(1)$ for each $i$ and so $\int_A^2 \frac{1-P(u)}{u}du = o(1)$ also.  
We thus have that
$$o(1) = \sigma(B) \geq 1 - I_1(B)\geq 3 - 4\log A - \int_2^{B}\frac{1-P(t)}{t}dt .
$$
\end{proof}
Lemma \ref{lembiquadPu} would give us a non trivial upper bound\footnote{By nontrivial, we mean that it must be smaller than the trivial bound given by $1-P(t)\leq 2$.} for $\int_2^{B}\frac{1-P(t)}{t}dt$ provided that we have sufficient information about $\chi_1$ and $\chi_2$.  The latter is furnished by Lemma \ref{lemquadchar}.  We collect the calculations and prove the theorem below.

\begin{proof}
For $2\leq u\leq 1+A$, we have by Lemma \ref{lemquadchar} and \ref{lembiquadPu} that $P(t) \geq 1-8\log (t-1)$.  Hence 
$$\int_2^{1+e^{1/4}}\frac{1-P(t)}{t}dt \leq \int_2^{1+e^{1/4}}\frac{8\log (t-1)}{t}dt < 0.13538.$$

In the range $2\leq u\leq 1+A$, we have by Lemma \ref{lemquadchar} that $P_i(t) \leq 1-4\log (t-1) +2Et$.  By Lemma \ref{lembiquadPu}, we have that $1-P(t) \leq 4(1-2\log (t-1) +Et)$.  This bound is only meaningful when the right hand side is $\leq 2$.  Thus, let $t_0<1+A$ be such that $2(1-2\log (t_0-1) +Et_0)=1$.  Then 
$$\int_{t_0}^{1+A} \frac{1-P(t)}{t}dt \leq 4\int_{t_0}^{1+A} \left(\frac{1-2\log (t-1)}{t} + E\right)dt.
$$
Further, in the range $1+A \leq u \leq 3$, we have by Lemma \ref{lemquadchar}  that $P_i(t) \leq 1-4\log \frac{A}{t-A} +2Et+ o(1)$.  By Lemma \ref{lembiquadPu}, we have that $\frac{1-P(t)}{t} \leq 4\frac{1-2\log \frac{A}{t-A} +Et}{t}+ o(1)$.  Let $t_1>1+A$ be such that $2(1-2\log \frac{A}{t-A} +Et_1)=1$.  Then 
$$\int_{1+A}^{t_1} \frac{1-P(t)}{t}dt \leq 4\int_{1+A}^{t_1} \left(\frac{1-2\log \frac{A}{t-A}}{t} + E\right)dt+ o(1).
$$
Let $t_2 = \frac{A(1+e^{1/4})}{e^{1/4}}$.  In the range, $t_2 \leq u \leq B \leq 2A$, we have by Lemma \ref{lemquadchar} that $1-P_i(t) \leq 4\log \frac{A}{t-A} + o(1)$.  Then similarly, we get that 
$$\int_{t_2}^{B} \frac{1-P(t)}{t}dt \leq 8\int_{t_2}^B \frac{\log \frac{A}{t-A}}{t}dt.
$$

We use the trivial bound of $1-P(t) \leq 2$ for the range not given above.  For any given $B$, the preceding discussion gives us an upper bound for $\int_2^{B}\frac{1-P(t)}{t}dt$ and we may derive a lower bound for $A$ by Lemma \ref{lembiquadtech} which states that  
$$4\log A \geq 3  - \int_2^{B}\frac{1-P(t)}{t}dt + o(1).$$

Without loss of generality, say that for some $\delta\geq 0$ that $q_1=q^{1-\delta}$ and $q_2=q$, and note that $B = (2-\delta)A$.  If $q_1$ is much smaller compared to $q_2$, then we expect to derive little benefit from the above and then our bound will be $\N \ll q^{\frac{1-\delta}{4\sqrt{e}}}$.    The rest is a numerical optimization using Maple over values of $\delta$ from which we derive that the worst value for $\delta$ occurs when $\delta = 0.061...$ and then
$$\N \ll q^{0.142}.
$$ or equivalently,
$$\N \ll (q_1q_2)^{\frac{0.146}{2}}.
$$
When $q_1 \asymp q_2 =q$, $\delta=0$ and we have that
$$\N \ll (q_1q_2)^{\frac{0.141}{2}}.
$$

\begin{rmk2}
The reader may be curious about whether this result might be improved if we included the $I_2(u)$ and $I_3(u)$ terms, as we did in the cubic case.  While we may improve the result with enough care, the possible improvements here are limited.  The reason is because when $1\leq t\leq A$, we expect $P_i(t)$ to be close to $-1$ and when $A< t\leq 2$, we have that $P_i(t) = 1$.  Thus $P(t)$ is close to $1$ for $1\leq t\leq 2$.  Hence for $u\leq 4$, it would be reasonable to expect $I_2(u)$ and $I_3(u)$ to be fairly small.
\end{rmk2}

\end{proof}
\paragraph{Acknowledgements:} I would like to express my gratitude to Professor Soundararajan for very generously sharing his time and ideas on various topics in this paper, as well as for his constant encouragement throughout.  I also wish to thank Vorrapan Chandee for a careful reading of this paper.  I am grateful to the referee for many helpful editorial remarks.

\end{document}